\documentclass[a4paper,11pt]{amsart}

\usepackage{comment}

\usepackage[hang,small,bf]{caption}
\usepackage[subrefformat=parens]{subcaption}
\usepackage{etoolbox}
\ifdef{\crop}{%
\usepackage[includeheadfoot,twoside=False,paperwidth=448pt,paperheight=587pt,rmargin=15pt,lmargin=15pt,tmargin=15pt,bmargin=15pt]{geometry}%
}{%
\setlength{\topmargin}{22mm}
\addtolength{\topmargin}{-1in}
\setlength{\oddsidemargin}{27mm}
\addtolength{\oddsidemargin}{-1in}
\setlength{\evensidemargin}{27mm}
\addtolength{\evensidemargin}{-1in}
\setlength{\textwidth}{156mm}
\setlength{\textheight}{240mm}
}%

\usepackage{mathtools}
\usepackage{color}
\usepackage[active]{srcltx}
\usepackage{amsmath,amsthm,amsxtra}
\usepackage{amssymb}
\usepackage{enumitem}

\usepackage[mathscr]{eucal}

\usepackage{bm}
\usepackage[all]{xy}

\usepackage{aliascnt}

\usepackage{tabularx}
\newcolumntype{C}{>{\centering\arraybackslash}X} 

\theoremstyle{plain}
\newtheorem{thm}{Theorem}[section]
\newtheorem*{thm*}{Theorem}

\newaliascnt{prop}{thm}
\newaliascnt{cor}{thm}
\newaliascnt{lem}{thm}
\newaliascnt{claim}{thm}
\newaliascnt{defn}{thm}
\newaliascnt{ques}{thm}
\newaliascnt{conj}{thm}
\newaliascnt{fact}{thm}
\newaliascnt{rem}{thm}
\newaliascnt{ex}{thm}
\newaliascnt{sett}{thm}

\newtheorem{cor}[cor]{Corollary}
\newtheorem{lem}[lem]{Lemma}
\newtheorem{claim}[claim]{Claim}
\newtheorem*{prop*}{Proposition}
\newtheorem*{cor*}{Corollary}
\newtheorem*{lem*}{Lemma}
\newtheorem*{claim*}{Claim}
\theoremstyle{definition}

\newtheorem*{defn*}{Definition}
\newtheorem*{ques*}{Question}
\newtheorem*{conj*}{Conjecture}

\newtheorem*{prob*}{Problem}

\newtheorem{rem}[rem]{Remark}
\newtheorem{ex}[ex]{Example}
\newtheorem*{fact*}{Fact}
\newtheorem*{rem*}{Remark}
\newtheorem*{ex*}{Example}
\aliascntresetthe{prop}
\aliascntresetthe{cor}
\aliascntresetthe{lem}
\aliascntresetthe{claim}
\aliascntresetthe{defn}
\aliascntresetthe{ques}
\aliascntresetthe{conj}
\aliascntresetthe{fact}
\aliascntresetthe{rem}
\aliascntresetthe{ex}
\aliascntresetthe{sett}

\usepackage{varioref}
\usepackage[a4paper]{hyperref}

\labelformat{equation}{\textnormal{(#1)}}
\labelformat{enumi}{\textnormal{(#1)}}

\def\textsectionN~{\textsection{}}

\renewcommand\phi{\varphi}
\renewcommand\epsilon{\varepsilon}
\renewcommand\leq{\leqslant}
\renewcommand\geq{\geqslant}

\newcommand{\abs}[1]{\lvert #1 \rvert}

\DeclareMathOperator{\Hom}{Hom}

\newcommand{\wid}{\mathrm{width}}

\DeclareMathOperator{\mult}{mult}

\DeclareMathOperator{\Conv}{Conv}
\DeclareMathOperator{\rank}{rk}

\DeclareMathOperator{\GL}{GL}

\DeclareMathOperator{\pr}{pr}

\def\Z{\mathbb{Z}}
\def\Q{\mathbb{Q}}
\def\R{\mathbb{R}}
\def\C{\mathbb{C}}

\def\r+{\mathbb{R}_{\geq 0}}

\def\ep{\varepsilon}

\def\r+{{\R}_{\geq 0}}
\def\q+{{\Q}_{\geq 0}}
\def\P{\mathbb{P}}

\def\*c{\C^{\times}}

\def\<{\langle}
\def\>{\rangle}

\def\lra{\Leftrightarrow}

\def\C{\mathbb {C}}

\def\Q{\mathbb {Q}}
\def\R{\mathbb {R}}

\def\Z{\mathbb {Z}}

\newcommand{\calb}{\mathcal {B}}

\newcommand{\calo}{\mathcal {O}}

\makeatletter
  
  \@addtoreset{equation}{section}
\makeatother

\DeclarePairedDelimiterX{\set}[1]{\lbrace}{\rbrace}{%
  \def\given{\;\Big\vert\;}%
  \mathopen{}#1\mathclose{}%
}

\title
[Seshadri constants, Gromov widths and generalized permutohedra]
{Seshadri constants and Gromov widths of toric varieties associated with generalized permutohedra}

\author[A.~Ito]{Atsushi~Ito}
\address{Department of Mathematics, Institute of Pure and Applied Sciences, University of Tsukuba, Tsukuba, Ibaraki 305-8571, Japan}
\email{ito-atsushi@math.tsukuba.ac.jp}

\subjclass[2020]{14C20, 52B20, 53D05}
\keywords{Seshadri constant, Gromov width, Lattice width, Generalized permutohedron}

\begin{document}

\begin{abstract}
In this paper, we study the Seshadri constants and Gromov widths of polarized toric varieties whose moment polytopes are generalized permutohedra.
We show that both invariants coincide with the lattice width of the moment polytope,
and provide an explicit formula for them in terms of the defining submodular function.
\end{abstract}

\maketitle

\section{Introduction}

The \emph{Seshadri constant} of a polarized variety $(X, L)$ at a point $p \in X $ is defined as
\begin{align}\label{eq_def_SC}
 \ep(X,L;p)=\inf_C \left\{\dfrac{(C \cdot L)}{\mult_p(C)}\right \},
\end{align}
where the infimum is taken over all reduced and irreducible curves $C$ on $X$ passing through $p$, $(C \cdot L) $ is the intersection number, and $\mult_p(C)$ is the multiplicity of $C$ at $p$.
This invariant measures the local positivity of $L$ at $p$.
We note that we can define $ \ep(X,L;p)$ in the same way
when $(X,L)$ is an \emph{$\R$-polarized variety},
that is, when $L$ is an ample $\R$-Cartier $\R$-divisor on $X$.
We refer the reader to \cite[Section 5]{MR2095471} for details.

In symplectic geometry, there exists an invariant that is closely related to Seshadri constants:
The \emph{Gromov width} $w_G(X, \omega)$ of a symplectic manifold $(X,\omega)$ of dimension $2n$ is the supremum of $\pi r^2$ 
such that the open ball $B(r) =\{ x \in \R^{2n} \mid  \|x\| < r\} $ equipped with the standard symplectic form $\omega_{\mathrm{std}} = \sum_{i=1}^n dx_{2i-1} \wedge dx_{2i}$ can be  symplectically embedded into $X$.
For a smooth $\R$-polarized variety $(X,L)$, let $\omega_L$ be a K\"ahler form on $X$ representing the Chern class $c_1(L) \in H^2(X,\R)$.
Then the inequality $\ep(X,L;p) \leq w_G(X,\omega_L)$ holds for any $p \in X$ by \cite[Corollary 2.1.D]{MR1262938}  (see also \cite[Theorem 5.1.22, Proposition 5.3.17]{MR2095471}).

\vspace{2mm}
Let $M \simeq \Z^n$ be a free abelian group of rank $n$ and $N\coloneqq\Hom(M, \Z)$ be its dual.
Set $ M_{\R} \coloneqq M \otimes_{\Z} \R $ and $N_{\R} \coloneqq N \otimes_{\Z} \R $.
For a lattice polytope $P \subset M_{\R}$ of dimension $n$,
we can construct a polarized toric variety $(X_P,L_P)$.
More generally, 
we can construct an $\R$-polarized toric variety $(X_P,L_P)$
for a \emph{quasi-rational} polytope $P \subset M_{\R}$ of dimension $n$.
Here a polytope $P \subset M_{\R}$ is called \emph{quasi-rational} if it is written as
\begin{align*}
P=\{ u \in M_{\R} \mid   \langle u,v_i \rangle \geq -a_i \text{ for } 1 \leq i \leq l\}
\end{align*}
for some $v_1, \dots, v_l \in N $ and $a_1, \dots, a_l \in \R$.

Both invariants $\ep(X,L;p), w_G(X,\omega)$ are in general difficult to compute explicitly.
No general formula is known even for toric varieties, though various results have been established (e.g., \cite{MR1696762, MR3276156} for Seshadri constants,
and  \cite{MR2210713, MR3734610, MR4251298,MR4565222, MR4717262} for Gromov widths).
In this paper,
we compute these invariants for the polarized toric variety $(X_P,L_P)$ in the case where $P$ is a \emph{generalized permutohedron}, which is defined as follows:
For a finite set $E$,  the set of all the subsets of $E$ is denoted by $2^E$.
A function $f : 2^E \to \R$ is called \emph{submodular} if
\[
f(I) + f(J) \geq f(I \cup J) + f(I \cap J)
\]
holds for any $I,J \in 2^E$.
For a submodular function $f$ with $f(\emptyset) =0$, we define the \emph{generalized permutohedron} $P_f$ as
\begin{align}\label{eq_gen_permutohedra}
P_f\coloneqq \set*{ x =(x_i)_{i \in E} \in \R^{E} \given \sum_{i \in E} x_i=f(E),\  \sum_{i \in I } x_i  \leq f(I) \text{ for any } I \in 2^{E} },
\end{align}
where $\sum_{i \in \emptyset } x_i \coloneqq 0$ by convention.
Since $f(E) - f( E \setminus \{i\}) \leq x_i \leq f(\{i\})$ for any $x \in P_f$ and $i \in E$,
the generalized permutohedron $P_f$ is bounded and hence is a quasi-rational polytope.
We note that although the original definition by A.~Postnikov \cite{MR2487491} differs from the one described above,
several equivalent definitions are now known (see \cite[Theorem 3.1.3]{MR4651496}, for instance).
We adopt this definition since we give a formula for Seshadri constants and Gromov widths using the function $f$.

We also consider  \emph{nestohedra}, 
which form a subclass of generalized permutohedra. 
Nestohedra, which were introduced independently by E.M.~Feichtner and B.~Sturmfels \cite{MR2191630} and A.~Postnikov \cite{MR2487491}, are defined as follows:

A \emph{building set} on a finite set $E$ is a collection $\mathcal{B}$ of nonempty subsets of $E$ such that
\begin{itemize}
\item If $I,J \in \mathcal{B} $ and $ I \cap J \neq \emptyset$, then $I \cup J \in \mathcal{B}$,
\item $\{i\} \in \mathcal{B}$ for all $ i\in E$.
\end{itemize}
The \emph{nestohedron} $P_{\mathcal{B}} $ is defined as the Minkowski sum of simplices
\begin{align*}
P_{\mathcal{B}} =\sum_{I \in \mathcal{B}} \Delta_I \subset \R^E,
\end{align*}
where 
$\Delta_I \subset \R^E$ is the convex hull of $\{e_i\}_{i \in I}$ for the standard basis $\{e_i\}_{i \in E}$ of $\R^E$. 
By \cite[Proposition 3.12]{MR2191630}, \cite[Sections 6, 7]{MR2487491}, the nestohedron $P_{\mathcal{B}} $ 
coincides with the generalized permutohedron $P_f$ defined by the submodular function
\begin{align*}
f : 2^{E} \to \R, \   I \mapsto \# \calb -  \# \calb|_{I^o},
\end{align*}
where $\mathcal{B}|_{I^o} \coloneqq \{ J \in \mathcal{B} \mid   J \subset I^o \}$ is the \emph{restriction} of $\calb$ to $I^o\coloneqq E \setminus I$,
and $\# S$ denotes the cardinality of a set $S$.
Furthermore, the toric variety $X_{P_{\calb}}$ is smooth \cite[Proposition 7.10]{MR2487491}.

In \cite{MR4565222}, S.~Choi and T.~Hwang provided an explicit formula for the Gromov widths of symplectic toric manifolds defined by graph associahedra,
which form a subclass of nestohedra,
and raised a question regarding the case of general nestohedra \cite[Remark 4.2]{MR4565222}.

In this paper, we give an explicit formula for $\ep(X_P,L_P;1_P) $ and $w_G(X_P,\omega_P)\coloneqq w_G(X_P,\omega_{L_P})$
for a generalized permutohedron $P$, not only for nestohedra,
where $1_P \in T_M \subset X_P$ is the identity element of the maximal torus $T_M \simeq (\C^{\times})^n$.

To state the formula, we recall an invariant of convex bodies.
For a convex body $K \subset M_{\R}$, that is, a closed convex set with non-empty interior,
the \emph{lattice width} $\wid(K) $  is defined as
\[
\wid(K) =  \min_{v \in N \setminus \{0\} }\max_{u,u' \in K} \abs{ v(u) -v(u') } ,
\]
where we regard $v \in N \subset N_{\R} $ as a linear map $v : M_{\R} \to \R$. 
Equivalently, $\wid(K)$ is the minimum of the length $\abs{v(K)}$ of the segment  $v(K) \subset \R$ for $v \in N \setminus \{0\}$.
From the definitions of Seshadri constants and lattice widths,
it is straightforward to see that $\ep(X_P,L_P;1_P) \leq \wid(P)$ holds (see \autoref{rem_Ito14} (2)).
When $P$ is Delzant,
that is, when $X_P$ is smooth, the corresponding inequality
$w_G(X_P,\omega_P) \leq \wid(P)$ has recently been proved by \cite[Corollary 3.8]{MR4717262}.
Unlike $\ep(X_P,L_P;1_P)$ and $w_G(X_P,\omega_P)$, $\wid(P)$ is easy to compute
(e.g., by using \texttt{LATTICE\_WIDTH} in \textsf{polymake} \cite{MR1785292}).

The following is the main result of this paper:

\begin{thm}\label{thm_gen_permutohedra}
Let $P=P_f \subset \R^E$ be the generalized permutohedron defined by a submodular function $f : 2^E \to \R$ with $f(\emptyset)=0$.
Assume that $\dim P >0$.
Then it holds that
\begin{align}\label{eq_thm_gen_permutohedra}
\begin{split}
\ep(X_P,L_P;1_P) &=\wid(P) \\
&= \min \{ f(I) + f(I^o) -f(E)  \mid   I  \in 2^E,  f(I) + f(I^o) -f(E)  >0\}.
\end{split}
\end{align}

In particular, if $P$ is Delzant, 
it holds that
\[
w_G(X_P,\omega_P) = \min \{ f(I) + f(I^o) -f(E)  \mid   I  \in 2^E,  f(I) + f(I^o) -f(E)  >0\}.
\]
\end{thm}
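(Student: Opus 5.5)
The plan has three parts: compute $\wid(P)$ directly from $f$, bound $\ep(X_P,L_P;1_P)$ from below by that value, and combine with the upper bounds quoted in the introduction. The lattice here is $M=\{x\in\Z^E \mid \sum_i x_i=0\}$ after translating $P$ into the hyperplane $\sum x_i=f(E)$, so $N=\Z^E/\Z(1,\dots,1)$.

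\textbf{Step 1: formula for the lattice width.} For $v\in\Z^E$ I will compute the length $\max_P v-\min_P v$ from the greedy description of support functions of $P_f$. Write $v$ by its level sets: with distinct values $c_1>\dots>c_k$ and $I_j=\{i \mid v_i\ge c_j\}$, we have
\[
v=\sum_{j<k}(c_j-c_{j+1})\mathbf 1_{I_j}+c_k\mathbf 1_E .
\]
Edmonds' greedy theorem gives $\max_P v=\sum_{j<k}(c_j-c_{j+1})f(I_j)+c_kf(E)$. Similarly $\min_P v=-\max_P(-v)$ is computed with the complements $I_j^o$, giving
\[
\min_P v=\sum_{j<k}(c_j-c_{j+1})\bigl(f(E)-f(I_j^o)\bigr)+c_kf(E).
\]
Hence
\[
|v(P)|=\sum_{j<k}(c_j-c_{j+1})\bigl(f(I_j)+f(I_j^o)-f(E)\bigr),
\]
which is a nonnegative integer combination of the quantities $g(I)\coloneqq f(I)+f(I^o)-f(E)\ge 0$; nonnegativity holds by submodularity. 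Taking $v=\mathbf 1_I$ shows $\wid(P)\le\min\{g(I)\mid g(I)>0\}$. For the reverse inequality, a class $v\neq 0$ in $N$ with $|v(P)|=0$ would make $P$ lie in a hyperplane of $M_\R$, contradicting full-dimensionality. So some $g(I_j)>0$ appears with coefficient at least $1$, and $|v(P)|\ge\min\{g(I)\mid g(I)>0\}$. This proves the second equality in \eqref{eq_thm_gen_permutohedra}.

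\textbf{Step 2: lower bound on the Seshadri constant.} The inequality $\ep\le\wid(P)$ is already available (\autoref{rem_Ito14}). For $\ep\ge\wid(P)$ I intend to use the standard toric criterion: if $P$ contains, up to a lattice translation and a unimodular transformation, a simplex $t\Delta_n$ with a vertex at a lattice point, then $\ep(X_P,L_P;1_P)\ge t$. More flexibly, I would use the argument of \autoref{rem_Ito14}, which presumably reduces the question to such inclusions or to degenerations. The natural candidate is built from a maximal chain $\emptyset=J_0\subset J_1\subset\dots\subset J_n=E$ (with $|E|=n+1$) and the corresponding greedy vertex $x^\sigma$ of $P$, whose coordinates are $x_{\sigma(j)}=f(J_j)-f(J_{j-1})$. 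The simplex would be $x^\sigma+t\cdot\mathrm{conv}\{0,\,e_{\sigma(j)}-e_{\sigma(j')}\}$ for suitable directions. The difficulty is that $\wid(P)$ can be strictly larger than every edge length, for example when $P$ decomposes into a product, so a single simplex at a vertex may not suffice. In that case I would first reduce to the connected case. If $g(I)=0$ for some proper nonempty $I$, then $P_f=P_{f|_I}\times P_{f'}$ with $f'$ the contraction, and Seshadri constants (and lattice widths) of products are minima of those of the factors. In the connected case, where $g(I)>0$ for all proper nonempty $I$, I would look for a point $y\in P$ such that
\[
y+t\,\mathrm{conv}\{0,\,e_{i_1}-e_{i_0},\dots,e_{i_n}-e_{i_0}\}\subset P,
\]
possibly after a further degeneration. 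I would verify this inclusion through the inequalities $\sum_I x\le f(I)$ and use $t=\min g$. Failing that, I would proceed by induction on $|E|$ via a toric degeneration or a fibration to a facet, which for generalized permutohedra is again a product of two generalized permutohedra.

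\textbf{Step 3: Gromov width and the main obstacle.} For Delzant $P$, the chain $\ep\le w_G\le\wid(P)$ from the introduction, together with \cite[Corollary 3.8]{MR4717262}, closes the argument. I expect the main obstacle to be Step 2, namely producing the lower bound $\ep\ge\wid(P)$: the difficulty is exhibiting a large enough simplex, or an equivalent degeneration, inside $P$ using submodularity.
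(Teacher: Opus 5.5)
\textbf{Verdict: there is a genuine gap.} Your Step~1 is correct, and it is a different route from the paper. The greedy formula $\abs{v(P)}=\sum_{j<k}(c_j-c_{j+1})\,g(I_j)$ computes $\wid(P)=l$ directly in all cases, whereas the paper only proves $\wid(P)\le l$ and gets equality from the sandwich $l\le\ep\le\wid(P)\le l$. One slip: for disconnected $f$ the lattice is that of $\mathrm{Aff}(P)$, not $\{x\in\Z^E\mid\sum_i x_i=0\}$. That sublattice is saturated, so every $v\in N$ lifts to $\Z^E$ and your argument still works.

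\textbf{The gap is Step~2.} It carries the whole content of the theorem: you never prove $\ep(X_P,L_P;1_P)\ge l$. Both routes you suggest fail already for connected $f$.
Take the hexagon $P=P_3(2,1,0)$, i.e.\ $f(I)=a_1+\dots+a_{\#I}$ with $(a_1,a_2,a_3)=(2,1,0)$. Then $P=\{x\mid\sum_i x_i=3,\ 0\le x_i\le 2\}$ and $g(I)=2$ for every proper nonempty $I$, so $l=2$.
\begin{itemize}
\item Your candidate $y+2\Conv\{0,e_1-e_3,e_2-e_3\}\subset P$ forces $y_3=2$ and $y_1=y_2=0$, contradicting $\sum_i y_i=3$. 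The opposite orientation forces $y=(2,2,0)$, also impossible.
\item No other simplex works either. By your own formula, $\abs{v(P)}=2(\max_i v_i-\min_i v_i)$, so $P$ has width $2$ only in the three directions $\mathbf 1_{\{i\}}$. Hence any unimodular triangle of size $2$ inside $P$ would be a translate of $\pm2\Conv\{0,e_1-e_3,e_2-e_3\}$, and neither fits.
\item So the obstruction has nothing to do with products: a product of polytopes containing $t\Delta$ contains $t\Delta$.
\item The facet fallback also fails. Fibering $\pi_I(x)=\sum_{i\in I}x_i$ over the endpoint $f(I)$ only gives $\min\{g(I),\ep(F_I)\}$, where $F_I$ is the product of the restriction and contraction polytopes. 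Submodularity gives $f(J)+f(I\setminus J)-f(I)\le g(J)$ for $J\subset I$, so the invariants of the face can be smaller. For the hexagon the facets are edges of lattice length $1$, giving only $\ep\ge 1$.
\end{itemize}

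\textbf{The missing idea} is to look for segments of length $l$ in $P$ rather than a simplex, and to use a lower bound that does not need these segments to meet.
\begin{itemize}
\item Write $-P_f=P_h$ with $h(I)=f(I^o)-f(E)$; then $P_f-P_f=P_{f+h}=\{x\mid\sum_{i\in E}x_i=0,\ \sum_{i\in I}x_i\le g(I)\text{ for all }I\}$ (Lemma~\ref{lem_P-P_GP}).
\item For connected $f$ and fixed $i_0$, we have $\pi_I(l(e_i-e_{i_0}))\in\{-l,0,l\}$ and $g(I)\ge l$ for proper nonempty $I$. Hence $l(e_i-e_{i_0})\in P-P$ for each $i\ne i_0$, giving $p_i,q_i\in P$ with $q_i-p_i=l(e_i-e_{i_0})$, at unrelated positions.
\item Corollary~\ref{cor_lower_bound} then gives $\ep\ge l$. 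It is Theorem~\ref{thm_combinatorial_bound} with $r=1$ applied iteratively: project along $u_n$, use the fiber containing $[p_n,q_n]$, and recurse on $\pi(P)$, which still contains the projected segments.
\item In the hexagon this is just the projection $x\mapsto x_2$. The image is $[0,2]$ and the middle fiber $x_2=1$ is a segment of length $2$, even though no size-$2$ triangle fits.
\end{itemize}
With this lower bound in place, your reduction to connected $f$ via the product formula and your Step~3 complete the proof.
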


We note that $\dim P \leq \# E-1$ holds for a generalized permutohedron  $P \subset \R^E$ and hence $P$ is not full-dimensional in $ \R^E$.
When we consider $X_P, L_P, \wid(P)$ in \autoref{thm_gen_permutohedra},
we regard $ P$ as a full-dimensional quasi-rational polytope in the affine space $\mathrm{Aff}(P) \subset \R^E$ spanned by $P$,
equipped with the affine lattice induced by $\Z^E$.
This makes sense since the isomorphism class of $(X_P,L_P)$ and $\wid(P)$ does not change by parallel translations.

As a corollary,
we give an answer to the question 
in \cite{MR4565222}:

\begin{cor}\label{cor_nestohedra}
Let $P=P_{\calb} \subset \R^E$ be the nestohedron defined by a building set $\calb$ on a finite set $ E$.
If $\dim P >0$, that is, if $\calb \neq \big\{ \{i\} \mid  i \in E \big\}$, 
then 
\begin{align*}
\ep(X_P,L_P ;1_P) =w_G(X_P,\omega_P) =\wid(P)= \min \{  n_I \mid   I \in \calb, n_I >0\},
\end{align*}
where $n_I\coloneqq  \# \{ J \in \calb \mid   I \cap J \neq \emptyset, I^o \cap J \neq \emptyset\} $ for $I \in 2^E$.
\end{cor}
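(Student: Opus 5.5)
The plan is to deduce \autoref{cor_nestohedra} from \autoref{thm_gen_permutohedra}, applied to the submodular function $f(I)=\#\calb-\#\calb|_{I^o}$. This function satisfies $f(\emptyset)=\#\calb-\#\calb|_E=0$ and has $P_f=P_{\calb}$. Since $X_{P_\calb}$ is smooth \cite[Proposition 7.10]{MR2487491}, the polytope $P$ is Delzant. So \autoref{thm_gen_permutohedra} gives all three equalities, with the common value being the minimum of $f(I)+f(I^o)-f(E)$ over $I\in 2^E$ where this quantity is positive. Two things remain: to identify this quantity with $n_I$, and to show that the minimum over $2^E$ may be restricted to $I\in\calb$.

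\textbf{Step 1: the quantity equals $n_I$.} We have $f(E)=\#\calb-\#\calb|_{\emptyset}=\#\calb$. Hence
\[
f(I)+f(I^o)-f(E)=\#\calb-\#\calb|_{I^o}-\#\calb|_{I}.
\]
The sets $\calb|_I$ and $\calb|_{I^o}$ are disjoint, because elements of $\calb$ are nonempty. Their complement in $\calb$ consists exactly of the $J\in\calb$ meeting both $I$ and $I^o$. So the quantity is $n_I$.

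\textbf{Step 1 (continued): the dimension condition.} If $\calb$ consists only of singletons, then $P$ is the single point $\sum_i e_i$. Otherwise some $J\in\calb$ has $\#J\geq 2$, and $P$ contains a translate of $\Delta_J$, so $\dim P>0$. This explains the stated equivalence.

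\textbf{Step 2: reduction to $I\in\calb$.} One inclusion is trivial: $\{n_I\mid I\in\calb,\ n_I>0\}\subset\{n_I\mid I\in 2^E,\ n_I>0\}$, so the minimum over $\calb$ is at least the minimum over $2^E$. For the reverse inequality, take any $I$ with $n_I>0$ and let $I_1,\dots,I_k$ be the maximal elements of $\calb|_I$.
\begin{itemize}
\item These sets partition $I$. Each $\{i\}$ with $i\in I$ lies in $\calb|_I$, so they cover $I$. Two intersecting members of $\calb|_I$ have their union in $\calb|_I$, so distinct maximal ones are disjoint.
\item Fix $a$ and let $J\in\calb$ meet both $I_a$ and $I_a^o$. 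If $J\subset I$, then $J\in\calb|_I$ and $J\cup I_a\in\calb|_I$ strictly contains $I_a$, contradicting maximality. Hence $J$ meets $I^o$, so $J$ is counted in $n_I$. This gives $n_{I_a}\leq n_I$.
\item Some $n_{I_a}$ is positive. Pick $J_0\in\calb$ meeting both $I$ and $I^o$. Then $J_0$ meets some $I_a$, and it meets $I^o\subset I_a^o$, so $n_{I_a}\geq 1$.
\end{itemize}
Thus for every $I\in 2^E$ with $n_I>0$ there is $I_a\in\calb$ with $0<n_{I_a}\leq n_I$, so the two minima coincide.

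The only step needing a genuine idea is Step 2. The key point is that decomposing $I$ into the maximal building-set blocks $I_a$ does not increase the count, because of the union axiom for building sets. Everything else is direct bookkeeping.
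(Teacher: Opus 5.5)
Your proof is correct and follows the paper's strategy. You apply \autoref{thm_gen_permutohedra}, using smoothness of $X_{P_\calb}$ for the Gromov width, identify $f(I)+f(I^o)-f(E)$ with $n_I$, and reduce the minimum to $I\in\calb$ by passing to maximal elements of $\calb|_I$ and invoking the union axiom. The only difference is in the reduction step. The paper first splits $I$ along the maximal elements $E_1,\dots,E_c$ of $\calb$ and uses $E_1$ as the witness for $n_{S_1}>0$. You instead take the maximal elements of $\calb|_I$ directly and use a set $J_0$ witnessing $n_I>0$ to choose the block. This is slightly shorter, and it works verbatim with positive weights $y_J$ as in \autoref{cor_Minkowski_sum}.
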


As shown in \cite[Example 5.6]{MR4251298}, for a symplectic toric $4$-fold $(X,\omega)$ with moment polygon $P \subset \R^2$,
the strict inequality $w_G(X,\omega) < \wid(P)$ can occur.\footnote{On the other hand, we always have $\frac{3}{4} \wid(P) < w_G(X,\omega)$ for such $(X,\omega)$, and the constant $\frac34$ is sharp; see \cite[Corollary 1.2]{Ito:2024aa}.}
However, if $X$ is additionally Fano, the equality $w_G(X,\omega) = \wid(P)$ holds, as stated in the paragraph following the example.\footnote{In fact, they state that $w_G(X,\omega)$ coincides with the upper bound $\Upsilon (P)$ given by \cite[Theorem 1.2]{MR2210713} for such $(X,\omega)$.
By \cite[Proposition 3.9]{MR4533481}, the bound $\Upsilon (P)$ coincides with the lattice width $\wid (P)$ for a Delzant polytope $P$.}
In \cite[Question 5.9]{MR4251298},
the authors ask whether the equality $w_G(X,\omega) = \wid(P)$ still holds for toric Fano manifolds  in higher dimensions,
particularly in the case where $\omega$ represents the anti-canonical divisor $-K_X$.
The following corollary gives an affirmative answer to the question in real dimension $6$.

\begin{cor}\label{cor_fano_3fold}
Let $P \subset \R^3$ be a quasi-rational polytope such that $ X_P$ is a smooth toric Fano variety of dimension $3$.
Then $\ep(X_P,L_P;1_P) =w_G(X_P,\omega_P) =\wid (P)$.
\end{cor}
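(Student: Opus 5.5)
The plan is to sandwich all three invariants. Since $X_P$ is smooth, $P$ is Delzant, so we already have
\[
\ep(X_P,L_P;1_P) \leq w_G(X_P,\omega_P) \leq \wid(P):
\]
the first inequality is \cite[Corollary 2.1.D]{MR1262938}, and the second is \cite[Corollary 3.8]{MR4717262}. It therefore suffices to prove $\ep(X_P,L_P;1_P) \geq \wid(P)$.

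The key point is that $P$ is an arbitrary quasi-rational polytope with a fixed normal fan, namely the fan $\Sigma$ of $X_P$. Its class ranges over the whole ample cone of $X_P$, not just $-K_{X_P}$. So the argument has to be fan-theoretic. I would use the classification of smooth toric Fano $3$-folds (Batyrev, Watanabe--Watanabe): there are $18$ fans $\Sigma$ up to $\GL_3(\Z)$. The polytopes whose normal fan is a coarsening of the braid fan of type $A_3$ are exactly the generalized permutohedra in $\R^4$. Here we identify $\R^3$ with $\mathrm{Aff}(P_f)$ with the induced lattice, so the braid fan lives in $\R^4/\R(1,1,1,1)$, whose rays are the classes of $0/1$-vectors $e_I$.

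The first step is a case check. For each of the $18$ fans I would decide whether some $\GL_3(\Z)$-transformation sends all its rays to classes of $e_I$, $\emptyset \neq I \subsetneq E$, $\# E=4$, and all its maximal cones to unions of braid chambers. I expect this to hold for $\P^3$, $\P^1\times\P^1\times\P^1$, $\P^2\times\P^1$, the blow-up of $\P^3$ at a point or along a line, $\P^1\times$ (blow-up of $\P^2$ at up to three points), and several other blow-ups. For every such fan, each ample $\R$-divisor has a moment polytope of the form $P_f$, and \autoref{thm_gen_permutohedra} gives $\ep(X_P,L_P;1_P)=\wid(P)$ directly.

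The main obstacle is the fans that are not braid coarsenings. These are typically the ones with a ray such as $-2e_3$-type twisting, for example $\P(\calo\oplus\calo(2))$ over $\P^2$, $\P(\calo\oplus\calo(1,1))$ over $\P^1\times\P^1$, and the like. For these I would argue via a toric fibration $\pi\colon X_P\to Y$ onto $\P^1$ or $\P^2$, that is, a lattice projection $v\colon M\to M'$ mapping $P$ onto the moment polytope $Q$ of $Y$. I would then aim for
\[
\ep(X_P,L_P;1_P)\geq \min\{\ep(Y,L_Q;1_Q),\ \ep(\text{fibre})\}
\]
by a curve-by-curve analysis: a curve through $1_P$ either lies in a fibre or maps finitely onto $Y$, and in the latter case one bounds $(C\cdot L_P)/\mult$ from below by pulling back. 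Alternatively, one can use superadditivity of $\ep(\,\cdot\,;1_P)$ in $L$ together with a decomposition of the nef cone into Minkowski summands for which the width is computable.

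The subtle point in either route is to show that the lower bound equals $\wid(P)$. I would verify this explicitly for each remaining family using the nef-cone parameters, by showing that the minimizing direction for the width is always either the fibre direction or pulled back from the base. Checking this identity family by family, and making sure no exotic direction $v$ gives a smaller width, is where I expect the real work to lie.
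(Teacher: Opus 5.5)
Your outline has the same shape as the paper's proof: sandwich $\ep\leq w_G\leq\wid$, handle fans that coarsen the braid fan by \autoref{thm_gen_permutohedra}, and handle the rest by a projection bound. However, both steps that carry the content are left as expectations, and the one concrete prediction you make is wrong. The bundle $\P_{\P^1\times\P^1}(\calo\oplus\calo(1,1))$ \emph{is} a braid coarsening. The unimodular map $e_1\mapsto\bar e_{\{1\}}$, $e_2\mapsto\bar e_{\{1,2,3\}}$, $e_3\mapsto\bar e_{\{1,2\}}$ into $\Z^4/\Z(1,1,1,1)$ sends its rays $e_1,e_2,e_3-e_1,e_3-e_2,e_3,-e_3$ to $\bar e_{\{1\}},\bar e_{\{1,2,3\}},\bar e_{\{2\}},\bar e_{\{1,2,4\}},\bar e_{\{1,2\}},\bar e_{\{3,4\}}$. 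Each maximal cone then becomes a chamber or a cone cut out by inequalities $x_i\geq x_j$, e.g.\ $\langle\bar e_{\{1\}},\bar e_{\{1,2,3\}},\bar e_{\{3,4\}}\rangle=\{x_1\geq x_2,\ x_3\geq x_2,\ x_3\geq x_4\}$. The actual exceptions are $Y_1=\P_{\P^2}(\calo\oplus\calo(2))$ and the blow-up $Y_2$ of $Y_1$ along a line in the section $\P_{\P^2}(\calo)$, which you never isolate. The missing idea, which replaces an $18$-fold fan check, is structural:
the two smooth toric Fano $3$-folds of Picard number $5$ are nestohedral varieties $X_{P_{\calb_1}},X_{P_{\calb_2}}$ \cite[Example 2.6]{MR4065180};
$16$ of the $18$ are obtained from them by successive toric blow-downs;
every edge of a moment polytope on such a blow-down is parallel to an edge of $P_{\calb_i}$;
and a polytope all of whose edges are parallel to vectors $e_i-e_j$ is a generalized permutohedron.

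For $Y_1,Y_2$, the inequality you propose to prove by a curve-by-curve analysis is exactly \autoref{thm_combinatorial_bound}. There the segment $p_1(P)$ corresponds to the $\P^1$-fibre through $1_P$ and the slice $P(u')$ plays the role of the base, so no new argument is needed. More importantly, the ``real work'' you expect, namely ruling out an exotic direction of smaller width, is unnecessary. Since $\ep\leq w_G\leq\wid\leq\abs{v(P)}$ for every $v\in N\setminus\{0\}$, it suffices to find one $v$ with $\ep\geq\abs{v(P)}$. For $P_{a,b}=\Conv(\{0\}\times b\Delta\cup\{a\}\times(2a+b)\Delta)$, apply \autoref{thm_combinatorial_bound} to $p_1$ with $u'=a$. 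This gives $\ep\geq\min\{a,2a+b\}=a=\abs{p_1(P_{a,b})}$, which closes the sandwich. The same works for $P_{a,b,c}=P_{a,b}\cap\{x+y\geq c\}$: the condition $\min\{a,b\}>c$ leaves both $p_1(P)=[0,a]$ and the slice $(2a+b)\Delta$ at $x=a$ unchanged.
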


\vspace{2mm}
This paper is organized as follows. 
In \autoref{sec_pre},
we recall some bounds for Seshadri constants of toric varieties in \cite{MR3276156}. 
In \autoref{sec_gen_perm}, we prove \autoref{thm_gen_permutohedra}, \autoref{cor_nestohedra}.
In \autoref{sec_toric_fano_3fold}, we prove \autoref{cor_fano_3fold}.
In this paper, varieties are defined over the field of complex numbers $\C$.
The notation $A \subset B$ means that  $A$ is a subset of $B$.
We use the notation $A \subsetneq B$ if $A$ is a proper subset of $B$.

\subsection*{Acknowledgments}
The author would like to thank Professors Suyoung Choi and Taekgyu Hwang for useful comments.
The author was supported by JSPS KAKENHI Grant Numbers 21K03201, 26K06719.

\section{Preliminaries}\label{sec_pre}

First, we recall some notation from toric geometry. We refer the reader to \cite{MR2810322, MR1234037} for details.

Let $M \simeq \Z^n$ be a free abelian group and  $P \subset M_{\R}$ be a lattice polytope of dimension $n$.
Let $X_P$ be the toric variety defined by the normal fan $\Sigma_P$ of $P$.
Let $v_1,\dots, v_r$ be all the primitive vectors in $N$ which span rays in $\Sigma_P$.
Then we can write
\[
P=\{ u \in M_{\R} \mid   \langle v_i, u \rangle \geq -a_i \text{ for } 1 \leq i \leq r\}
\]
for some $a_i \in \Z$ and we define a divisor $L_P$ on $X_P$ by
\[
L_P= \sum_{i=1}^r a_i D_i,
\]
where $D_i \subset X_P$ is the toric prime divisor corresponding to the ray spanned by $v_i$.
It is known that $L_P$ is an ample Cartier divisor on $X_P$.

This construction can be generalized to the case where $P $ is a quasi-rational polytope.
In fact, the normal fan $\Sigma_P$ is rational as well, and hence we can define $X_P$ and $L_P$ as above for some $a_i \in \R$. 
In \cite[Section 4]{MR3466351}, it is shown that $L_P$ is an ample $\R$-Cartier $\R$-divisor.
Hence we can define
the Seshadri constant $\ep(X_P,L_P;1_P)$ by
\[
 \ep(X_P,L_P;1_P)=\inf_C \left\{\dfrac{(C \cdot L_P)}{\mult_{1_P}(C)}\right \}
\]
as in the case of lattice polytopes.
For simplicity, the Seshadri constant 
$\ep(X_P,L_P;1_P)$ is denoted by $\ep(P)$ in the rest of this section.

A key tool in this paper is the following result from \cite{MR3276156}.

\begin{thm}[{\cite[Theorem 3.6]{MR3276156}}]\label{thm_combinatorial_bound}
Let $M, M'$ be free abelian groups of rank $n, r$, respectively, and $ \pi: M_{\R} \to M'_{\R}$ be the linear map induced from a surjective group homomorphism $\pi_{\Z} : M \to M'$.
Let $P \subset M_{\R} $ be an $n$-dimensional 
quasi-rational polytope and take $u' \in \pi(P) $ such that the dimension of $ P(u')\coloneqq \pi^{-1}(u') \cap P$ is $n-r$.
Then we have
\begin{align}\label{eq_thm_Ito14}
\min\{ \ep(\pi(P)), \ep(P(u'))\} \leq \ep(P) \leq \ep(\pi(P)),
\end{align}
where we regard $ P(u') \subset \pi^{-1}(u')  $ as a quasi-rational polytope in $\ker \pi =(\ker \pi_{\Z} )_{\R} \simeq \R^{n-r}$ by a suitable parallel translation $\pi^{-1}(u')   \simeq \ker \pi $.
\end{thm}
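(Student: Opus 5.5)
The statement is \cite[Theorem 3.6]{MR3276156}, and I would prove it by treating the two inequalities separately. The upper bound comes from a torus-invariant subvariety, and the lower bound from a combinatorial description of Seshadri constants via jet separation, reduced to statements about polynomials vanishing on lattice points.

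\emph{Upper bound.} The surjection $\pi_{\Z}\colon M\to M'$ dualizes to an injection $N'\hookrightarrow N$. This gives a subtorus $T_{N'}\otimes\*c \hookrightarrow T_M$ through $1_P$; here $T_{N'}\otimes\*c$ is the torus whose character lattice is $M'$. Let $Y$ be its closure in $X_P$. The normalization of $Y$ is the toric variety of the normal fan of $\pi(P)$. Restriction of sections is projection of exponents, $x^u\mapsto x^{\pi(u)}$, so the pullback of $L_P$ is $L_{\pi(P)}$. Every curve in $Y$ through $1_P$ is a curve in $X_P$ with the same degree and multiplicity. The normalization map is an isomorphism near the smooth point $1_P$, so degrees and multiplicities are also preserved for curves on the normalization. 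Hence $\ep(P)\leq\ep(\pi(P))$.

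\emph{Lower bound.} I would use $\ep(X,L;p)=\lim_k s(kL;p)/k$, where $s$ is the maximal order of jets separated at $p$. For the $\R$-case one can approximate $P$ by rational polytopes with the same normal fan and use continuity of $\ep$ on the ample cone. In logarithmic coordinates at $1_P$, the $s$-jet of $x^u$ is the vector of values at $u$ of all monomials of degree $\leq s$. Thus $kP\cap M$ separates $s$-jets if and only if no nonzero polynomial $g$ of degree $\leq s$ on $M_\R$ vanishes on $kP\cap M$. The same criterion applies to $\pi(P)$ and to $P(u')$.

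Split $M\simeq M'\oplus\ker\pi_{\Z}$ and suppose $g\neq0$ of degree $\leq s$ vanishes on $kP\cap M$, with $s<k\,m(1-\delta)$ where $m=\min\{\ep(\pi(P)),\ep(P(u'))\}$. Write $\ep_\pi=\ep(\pi(P))$ and $\ep_F=\ep(P(u'))$. Let $d$ be the degree of $g$ in the fiber variables, and let $c(w')$ be a nonzero coefficient of a fiber monomial of degree $d$; then $\deg c\leq s-d$. By convexity, for $0\leq t\leq 1$ the fiber of $kP$ over $k((1-t)u'+t\,\pi(P))$ contains a translate of $(1-t)kP(u')$. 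So if $d<(1-t)k\ep_F$ (up to $o(k)$), the polynomial $g(w',\cdot)$ of degree $d$ vanishing on the fiber lattice points forces $c(w')=0$ for all lattice $w'$ in $k((1-t)u'+t\pi(P))$. This region is a translate of $tk\,\pi(P)$, so $c\equiv0$ as soon as $s-d<tk\ep_\pi$, which is a contradiction.

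It remains to choose $t$ just below $1-d/(k\ep_F)$ and check that $tk\ep_\pi>s-d$.
\begin{itemize}
\item If $\ep_\pi\leq\ep_F$, then $tk\ep_\pi\approx k\ep_\pi-d\ep_\pi/\ep_F\geq k\ep_\pi-d\geq km-d>s-d$.
\item If $\ep_F<\ep_\pi$, then $m=\ep_F$, and $tk\ep_\pi\approx(\ep_\pi/\ep_F)(k\ep_F-d)\geq k\ep_F-d>s-d$.
\end{itemize}
Hence $s(kL_P)\geq k m(1-\delta)-o(k)$, and letting $k\to\infty$ and then $\delta\to0$ gives $\ep(P)\geq m$.

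The main obstacle is making the fiberwise step rigorous uniformly in $w'$. Fibers over lattice points $w'\neq ku'$ are translates of polytopes containing $(1-t)kP(u')$, but shifted by non-lattice vectors, so the fiber lattice points may not be those of a lattice translate. I would handle this with a uniform version of the jet-separation criterion: for a fixed full-dimensional quasi-rational $Q$, all real translates of $kQ$ have lattice points avoiding every degree-$s$ hypersurface once $s<k(\ep(Q)-\delta)$ and $k\gg0$. I would prove this by shrinking $Q$ slightly and absorbing the translation.
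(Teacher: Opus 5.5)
The paper does not prove this statement itself. It imports the case of rational $P$ and $u'\in\pi(P)\cap M'_{\Q}$ from \cite[Theorem 3.6]{MR3276156}, and reaches quasi-rational data by approximating both $P$ and $u'$. You instead give a self-contained argument, and it is sound.

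\emph{Upper bound.} The closure of the subtorus with character lattice $M'$ has normalization $X_{\pi(P)}$, the pullback of $L_P$ is $L_{\pi(P)}$, and the normalization is an isomorphism near $1_P$. This part is correct.

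\emph{Lower bound.} The following steps all check out:
\begin{itemize}
\item the dictionary ``$kL_P$ separates $s$-jets at $1_P$ iff no nonzero polynomial of degree $\leq s$ vanishes on $kP\cap M$'';
\item the expansion of $g$ in fiber variables, with a top coefficient $c$ of degree $\leq s-d$;
\item the convexity inclusion of a real translate of $(1-t)kP(u')$ in the fibers over $k(1-t)u'+tk\pi(P)$;
\item the shrink-and-absorb proof of your uniform translate lemma;
\item your two-case comparison of $tk\ep_\pi$ with $s-d$.
\end{itemize}

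\emph{Comparison.} Your route never needs fibers or base regions to be lattice translates, so $u'$ may stay real. The only approximation left is to extend Demailly's characterization to quasi-rational polytopes. You should apply that approximation to each of $P$, $\pi(P)$ and $P(u')$ on its own toric variety, using rational approximants from inside and outside with the same normal fan, not only to $P$. The paper's route is shorter but rests entirely on the external theorem.

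\emph{One step needs repair.} Choosing $t$ just below $1-d/(k\ep_F)$ fails when $d=o(k)$, for instance $d=0$. Then $(1-t)k$ stays bounded, so your uniform lemma, which needs a large dilation factor, does not apply. A fiber may then contain no lattice point at all, and nothing forces $c(w')=0$.

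Take instead $(1-t)k=\max\{d/(\ep_F-\delta'),\Lambda\}+1$, where $\Lambda$ is the threshold of the lemma for $P(u')$ and $\delta'$ is small compared with $\delta\min\{\ep_F,\ep_\pi\}$. Since $d\leq s<k\ep_F(1-\delta)$, this keeps $tk\geq k\delta/3$ for $k\gg0$, so the lemma also applies to the translate of $tk\pi(P)$. It moves $tk$ by only $O(1)$, so $s-d<tk(\ep_\pi-\delta')$ still holds for $k\gg0$ in both of your cases.
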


\begin{rem}\label{rem_Ito14}
\begin{enumerate}
    \item The original statement of \cite[Theorem 3.6]{MR3276156} assumes that $P$ is a rational polytope and $u' \in \pi(P) \cap M'_{\Q}$. 
    The quasi-rational case follows from the rational case by approximating $P$ with rational polytopes and $u'$ with points in $\pi(P) \cap M'_{\Q}$.
    \item For a segment $Q \subset \R$, we have $(X_Q,L_Q)=(\P^1, \calo_{\P^1}(\abs{Q}))$, and hence
    \begin{align}\label{eq_1-dim}
    \ep(Q) = \ep(\P^1, \calo_{\P^1}(\abs{Q});1_Q) =\deg \calo_{\P^1}(\abs{Q}) =\abs{Q}.
    \end{align}
    Thus, if $r=\rank M' =1$, \ref{eq_thm_Ito14} is equivalent to
    \begin{align}\label{eq_r=1}
    \min\{ \abs{\pi(P)}, \ep(P(u'))\} \leq \ep(P) \leq \abs{\pi(P)}.
    \end{align}
    In particular, $\ep(P) \leq \wid(P)$ holds.
\end{enumerate}
\end{rem}

As a consequence of \autoref{thm_combinatorial_bound}, we obtain the following corollary.

\begin{cor}\label{cor_lower_bound}
Let $M \simeq \Z^n$ be a free abelian group, $P \subset M_{\R}$ be a quasi-rational polytope of dimension $n$,
and $(X_P,L_P)$ be the corresponding $\R$-polarized toric variety.

Assume that there exist
\begin{itemize}
\setlength{\itemsep}{0mm}
\item  $p_i,q_i \in P$ for $1 \leq i \leq n$,
\item $u_1, \dots, u_n \in M$, which form a basis of $M_{\R} \simeq \R^n$,
\item a real number $\rho > 0$
\end{itemize}
such that 
\begin{align}\label{eq_q_i-p_i}
q_i-p_i \in  \rho u_i +  \R u_{i+1} \oplus \cdots \oplus \R u_{n}
\end{align}
for each $1 \leq i \leq n$.\footnote{By convention,
we set $\R u_{i+1} \oplus \cdots \oplus \R u_{n} =\{0\}$ for $i=n$.
In particular, $ q_n-p_n=\rho u_n$ holds.}
Then $\ep(P) \geq  \rho$ holds.

In particular,
if $P$ is Delzant, 
then $w_G(X_P, \omega_{P}) \geq  \rho$  also holds, where $ \omega_P$ is a K\"ahler form on $X_P$ representing the class $c_1(L_P)$.
\end{cor}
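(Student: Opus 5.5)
We argue by induction on $n$, applying \autoref{thm_combinatorial_bound} with $r=1$ at each step, i.e.\ in the form \ref{eq_r=1}.

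For $n=1$, $P$ is a segment. The points $p_1,q_1\in P$ satisfy $q_1-p_1=\rho u_1$ with $u_1\in M$ nonzero. Hence $\abs{P}\geq \rho\,\abs{u_1}\geq \rho$, where $\abs{u_1}\geq 1$ is the lattice length of $u_1$. By \ref{eq_1-dim}, $\ep(P)=\abs{P}\geq\rho$.

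For the inductive step, suppose $n\ge 2$. Let $M'\subset M$ be the saturation of $\Z u_2+\cdots+\Z u_n$, a saturated sublattice of rank $n-1$. Set $M''=M/M'\simeq \Z$ and let $\pi\colon M_\R\to M''_\R\simeq\R$ be the projection. Its kernel is $M'_\R=\R u_2\oplus\cdots\oplus\R u_n$.

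The upper endpoint of the min in \ref{eq_r=1} is $\abs{\pi(P)}$. By \ref{eq_q_i-p_i} for $i=1$, we have $\pi(q_1)-\pi(p_1)=\rho\,\pi(u_1)$. The vectors $u_1,\dots,u_n$ form a basis, so $u_1\notin M'_\R$. Therefore $\pi(u_1)$ is a nonzero element of $M''\simeq\Z$ and has absolute value at least $1$. This gives $\abs{\pi(P)}\geq\rho$.

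For the fiber term, we need a point $u'\in\pi(P)$ such that the fiber $P(u')$ has dimension $n-1$ and contains translates of the segments $[p_i,q_i]$ for $i\ge 2$. This is the main obstacle, because the points $p_i,q_i$ need not lie in a common fiber. I would handle it by convexity. Fix an interior point $c\in P$, choose $u'=\pi(c)$, and replace each pair by its homothetic copy
\[
p_i^t=c+t(p_i-c),\qquad q_i^t=c+t(q_i-c),
\]
for small $t$. The differences only scale, to $q_i^t-p_i^t=t(q_i-p_i)$, which is not enough. Instead I would use the following averaging idea. The midpoint $m_i=\tfrac12(p_i+q_i)$ lies in $P$, and the segment $[p_i,q_i]$ has direction in $M'_\R$ for $i\ge2$. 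Hence $[p_i,q_i]$ lies in the fiber over $\pi(p_i)$. The sets $\pi^{-1}(s)\cap P$ vary concavely in $s$ in the Minkowski sense. So the fiber over a suitable convex-combination point contains a translate of a combination of the segments $[p_i,q_i]$.

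Cleaner still is to choose $u'$ so that all the needed segments fit into one fiber. Suppose $s_i=\pi(p_i)$ lie in $\pi(P)$, and choose $s$ in their convex hull. Writing $s=\sum_j \lambda_j s_j$, Minkowski concavity gives
\[
P(s)\supseteq \sum_j \lambda_j P(s_j).
\]
This only bounds segment lengths by $\lambda_j\rho$ and loses a factor, so it fails. The better route is to strengthen the statement being proved by induction so that the pairs live in arbitrary fibers. That is, prove the claim directly from \autoref{thm_combinatorial_bound} applied iteratively, with the projection to $M/M'$ where $M'$ is the saturated span of $u_{k},\dots,u_n$. Alternatively, apply \autoref{thm_combinatorial_bound} with a general $r$, projecting onto the quotient by the saturated span of $u_n$. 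Then the one-dimensional fiber is a segment that can be taken to contain a translate of $[p_n,q_n]$, by choosing $u'=\pi(p_n)$. The quotient polytope $\pi(P)$ contains the projected points $\pi(p_i),\pi(q_i)$ for $i<n$, which satisfy \ref{eq_q_i-p_i} in $M/\langle u_n\rangle^{\mathrm{sat}}$ with the same $\rho$ up to a positive integer factor that is at least $1$. Induction on $\pi(P)$, which has dimension $n-1$, gives $\ep(\pi(P))\geq\rho$. The fiber $P(\pi(p_n))$ contains $[p_n,q_n]$ of lattice length at least $\rho$, and it has dimension $1=n-r$ as soon as $\pi(p_n)$ is chosen in the relative interior of $\pi(P)$. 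If $\pi(p_n)$ is not there, I would perturb $p_n,q_n$ slightly toward an interior point and pass to the limit in $\rho$.

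With both terms at least $\rho$, \ref{eq_thm_Ito14} gives $\ep(P)\ge\rho$. The Gromov width statement then follows from $\ep(X_P,L_P;1_P)\le w_G(X_P,\omega_P)$.
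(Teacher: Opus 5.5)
Your final argument is correct and is exactly the paper's proof: you project along the saturation of $\R u_n$, bound the fiber term of \ref{eq_thm_Ito14} by the segment $[p_n,q_n]\subset P(\pi(p_n))$, and apply induction to $\pi(P)$ with the basis $\pi(u_1),\dots,\pi(u_{n-1})$. The earlier rank-one projection and Minkowski-concavity attempts can be deleted, and two further pieces are unnecessary: the perturbation step, because $q_n-p_n=\rho u_n\neq 0$ lies in $\ker\pi$ and so $P(\pi(p_n))$ automatically has dimension $1=n-r$, and the integer-factor adjustment, because the projected pairs satisfy \ref{eq_q_i-p_i} with exactly the same $\rho$.
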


\begin{proof}
For $p,q \in M_{\R}$, $[p,q] \subset M_{\R}$ denotes the line segment with endpoints $p,q$.

We show this corollary by induction on $n$.
If $n=1$, we have 
\[
\ep(P) =\abs{P} \geq  \abs{[p_1,q_1] } \geq \rho
\]
by \ref{eq_1-dim} since $q_1-p_1=\rho u_1$.

Assume $n \geq 2$ and let $\pi_{\Z} : M \to M'\coloneqq M/ (\R u_n \cap M)$ be the canonical quotient map,
which induces a linear map $\pi : M_{\R} \to M'_{\R}$.
Since a fiber of $\pi$ contains $[p_n,q_n] =[p_n,p_n+ \rho u_n ]$,
we have
\[
\ep(P) \geq \min\{  \ep(\pi(P)) , \rho\}
\]
by \ref{eq_r=1}.
Since
\begin{itemize}
\setlength{\itemsep}{0mm}
\item $\pi(p_i),\pi(q_i) \in \pi(P)$ for $1 \leq i \leq n-1$, 
\item $\pi(u_1), \dots, \pi(u_{n-1}) \in M'$, which form a basis of $M'_{\R} \simeq \R^{n-1}$
\end{itemize}
satisfy
\begin{align*}
\pi(q_i)-\pi(p_i) \in  \rho \pi(u_i) +  \R \pi(u_{i+1}) \oplus \cdots \oplus \R \pi(u_{n-1})
\end{align*}
for $1 \leq i \leq n-1$,
we have $\ep(\pi(P)) \geq \rho$ by the induction hypothesis.
Hence $\ep(P)  \geq \rho$ holds.

The last statement follows from the inequality $\ep(P) \leq w_G(X_P,\omega_P)$ in \cite[Corollary 2.1.D]{MR1262938}.
\end{proof}

\begin{rem}
In \cite{MR4565222},
the authors use lower bounds for Gromov widths obtained in \cite[Section 4.2]{MR3190299} and \cite[Proposition 5]{MR3763945};
see also \cite[Proposition 3.3]{MR4251298} for a detailed proof.
These results establish $ w_G(X_P,\omega_P) \geq \rho$ in the setting of \autoref{cor_lower_bound} under the following additional conditions:
\begin{enumerate}[label=(\roman*)]
\setlength{\itemsep}{0mm}
\item $q_i-p_i =\rho u_i$ instead of \ref{eq_q_i-p_i}, 
\item $u_1,\dots,u_n$ form a basis of $M$,
\item the line segments $[p_i,q_i]$ share a common point.
\end{enumerate}
The proof in \cite{MR4251298} explicitly constructs symplectic embeddings of open balls into $X_P$,
where condition (iii) is essential for identifying the image of the center of the ball.

Although Seshadri constants can be interpreted as the maximal size of  a K\"ahler embedding of open balls (see, for instance, \cite{MR3656421, MR3761104}),
we do not use this interpretation in the proof of \autoref{thm_combinatorial_bound} in \cite{MR3276156}.
Instead, we directly estimate the intersection numbers appearing in \ref{eq_def_SC}.
As a consequence, \autoref{cor_lower_bound} does not require condition (iii),
which increases the flexibility in choosing $p_i$ and $q_i$.
For an example illustrating this advantage, see \autoref{ex_HLS}.
This flexibility is also essential in the proof of \autoref{thm_gen_permutohedra} in \autoref{sec_gen_perm}.

We also note that \autoref{cor_lower_bound} generalizes the case $j=n$ of \cite[Lemma 4.6]{MR4065714},
which establishes $ \ep(P) \geq \rho$ under the additional condition (i).
\end{rem}

\begin{ex}\label{ex_quadrilateral}
For $ t \in \Q$, let $P_t =\Conv( (1,t), (0,1), (-1,-t), (0,-1) ) \subset \R^2$ as in \autoref{fig:polygon}. 
By taking 
\[
p_1=(-1,-t), \  q_1=(1,t), \  p_2 =(0,-1), \  q_2=(0,1), \  u_1=(1,0), \  u_2=(0,1), \  \rho=2 ,
\]
we have $\ep(P_t) \geq 2$.
Since $ \wid(P_t) \leq \abs{p_1 (P_t)} =2$ for the projection $p_1 : \R^2 \to \R$ to the first factor,
we have $ \ep(P_t) =\wid(P_t)=2$.
Of course, we can directly apply  \autoref{thm_combinatorial_bound} to the projection $p_1 : \R^2 \to \R$.
\end{ex}

\begin{figure}[htbp]
  \centering
 \includegraphics[scale=0.7]{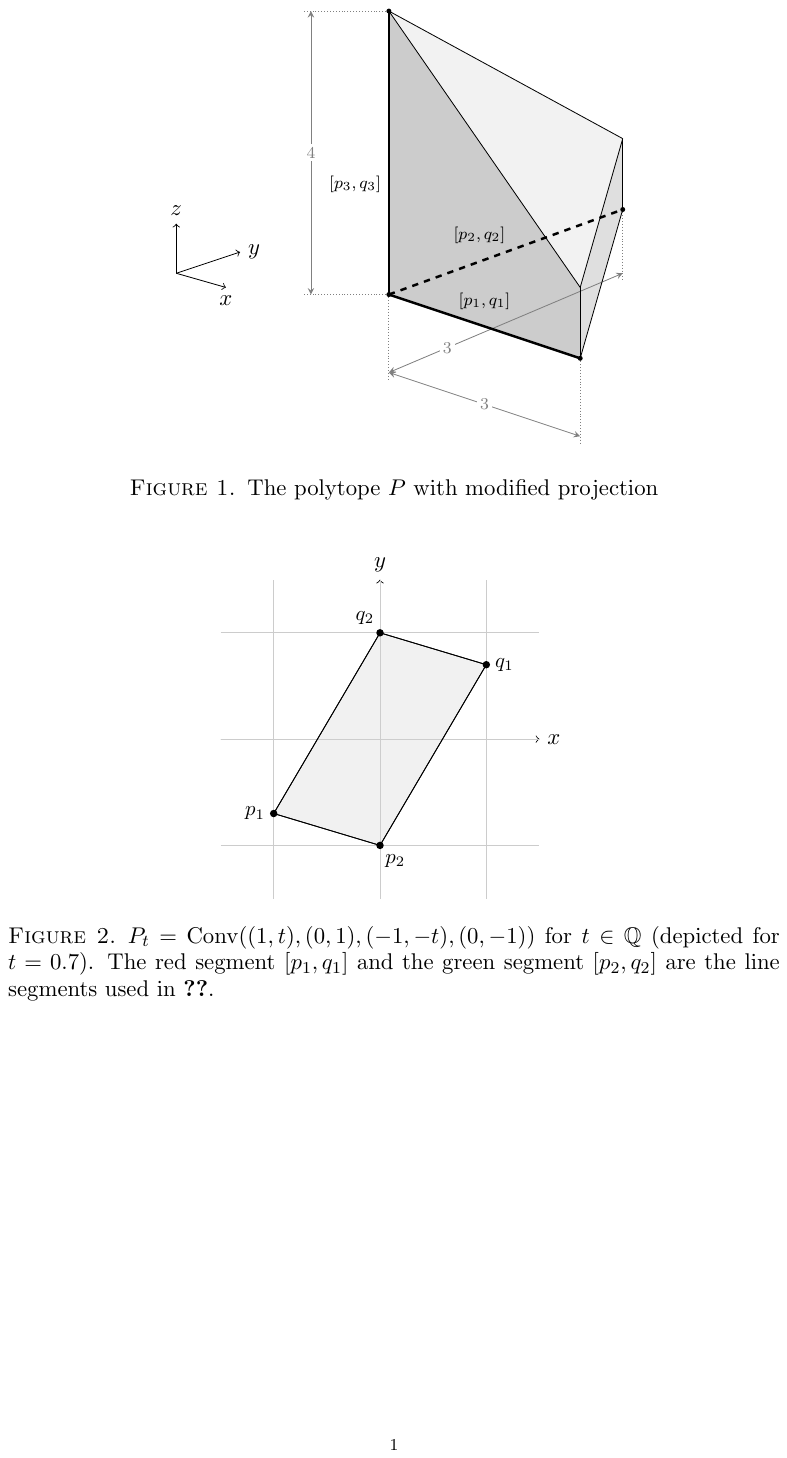}
  \caption{The parallelogram $P_t$ in \autoref{ex_quadrilateral}} 
  \label{fig:polygon}
\end{figure}

\begin{ex}\label{ex_HLS}
For $t > 0$, let $P_t \subset \R^3$ be the convex hull of
\begin{align*}
[-1, -1+t] \times [0,1] \times \{1\} \ \cup \ [-1, t] \times [-1,1] \times \{0\} \ \cup \ [-1, 1+t] \times [-1,0] \times \{-1\}
\end{align*}
as in \autoref{fig:HLS}.
We note that $P_t$ coincides with the moment polytope of the symplectic toric manifold
$(X, \omega_t)$ in \cite[Example~5.7]{MR4251298} up to $\GL(\Z^3)$.
In \cite[Example 5.7, Question 5.8]{MR4251298}, the authors show $\frac{5+t}{3} \leq w_G(X_{P_t},\omega_{P_t}) \leq 2$ for $0 < t \leq 1$
and ask what the exact value of $w_G(X_{P_t},\omega_{P_t})$ is.
We can show that $\ep(P_t)= w_G(X_{P_t},\omega_{P_t}) = 2$ for any $t >0$ as follows:

By $\ep(P_t) \leq w_G(X_{P_t},\omega_{P_t}) \leq \wid(P_t) =2$, it suffices to show $\ep(P_t) \geq 2 $.
This follows from \autoref{cor_lower_bound} since 
\begin{align*}
[p_1,q_1] &=[-1,1] \times \{0\} \times \{-1\},  \\
[p_2,q_2] &=  \{-1\} \times [-1,1 ] \times \{0\}, \\
[p_3,q_3]& =  \{-1\} \times \{0\} \times [-1,1]
\end{align*}
and 
\[
u_1=(1,0,0), \quad u_2=(0,1,0), \quad u_3=(0,0,1), \quad \rho=2
\]
satisfy the assumption in \autoref{cor_lower_bound}.
\end{ex}

\begin{figure}[htbp]
  \centering
 \includegraphics[scale=0.88]{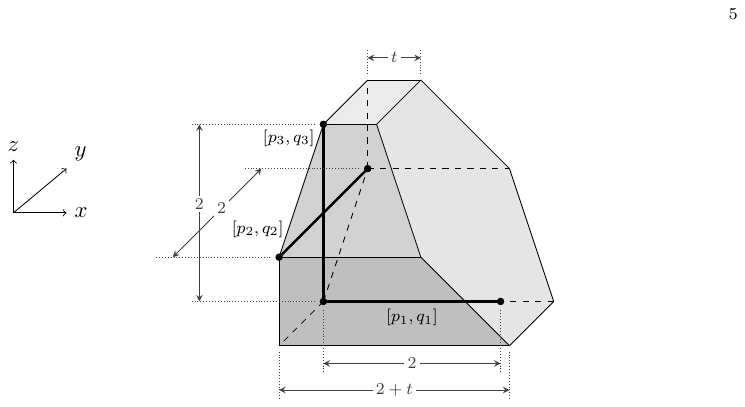}
  \caption{The polytope $P_t$ in \autoref{ex_HLS}} 
  \label{fig:HLS}
\end{figure}

\begin{ex}\label{ex_toric_fano3fold}
The polytope $P_1$ in \autoref{ex_HLS} corresponds to the anti-canonical divisor $-K_X$ of a smooth toric Fano $3$-fold $X$.
It is known that there are $18$ smooth toric Fano $3$-folds, which are  classified by \cite{MR631434, MR670903}.
We can compute $\ep(X,-K_X;1)$ and $w_G(X,\omega_{-K_X})$ with moment polytope $P$ for such Fano $3$-folds as follows:  
\begin{enumerate}
\setlength{\itemsep}{0mm}
\item If $X=\P^3$, then $\ep(X,-K_X;1)=w_G(X,\omega_{-K_X})=\wid(P) =4$.
\item If $X=\P_{\P^1}(\calo \oplus\calo \oplus \calo(1))$,
then $\ep(X,-K_{X}; 1) = w_G(X, \omega_{-K_X}) =\wid (P)= 3$.
\item For the remaining $16$ cases, $\ep(X,-K_{X}; 1) = w_G(X, \omega_{-K_X}) =\wid (P)= 2$.
\end{enumerate}
For example, the moment polytope $P$ of $-K_X$ in (2) is given by
\begin{align*}
P=\Conv (\{-1\} \times \{-1\} \times [-1,3], \{2\} \times \{-1\} \times [-1,0], \{-1\} \times \{2\} \times [-1,0] ) 
\end{align*}
(see \autoref{fig:Fano3fold}).
Hence, the inequalities $3 \leq \ep(X,-K_{X}; 1) \leq w_G(X, \omega_{-K_X}) \leq \wid(P)$ hold by taking
\begin{align*}
[p_1,q_1]&=[-1,2] \times \{-1\} \times \{-1\} , \\
[p_2,q_2] &= \{-1\} \times [-1,2] \times \{-1\}, \\
[p_3,q_3] &= \{-1\} \times \{-1\} \times [-1,2].
\end{align*}
On the other hand, $ \wid (P) \leq 3$ holds  since $p_1(P) =[-1,2]$ for the projection $p_1 : \R^3 \to \R$ to the first factor.
We leave (1), (3) to the reader.

In particular,  $\ep(X,-K_{X}; 1) = w_G(X, \omega_{-K_X}) =\wid (P)$ holds for any smooth toric Fano $3$-fold $X$.
We will generalize this statement for any ample $\R$-line bundle $L$ on such $3$-fold $X$ in \autoref{cor_fano_3fold}.
\end{ex}

\begin{figure}[htbp]
  \centering
 \includegraphics[scale=0.88]{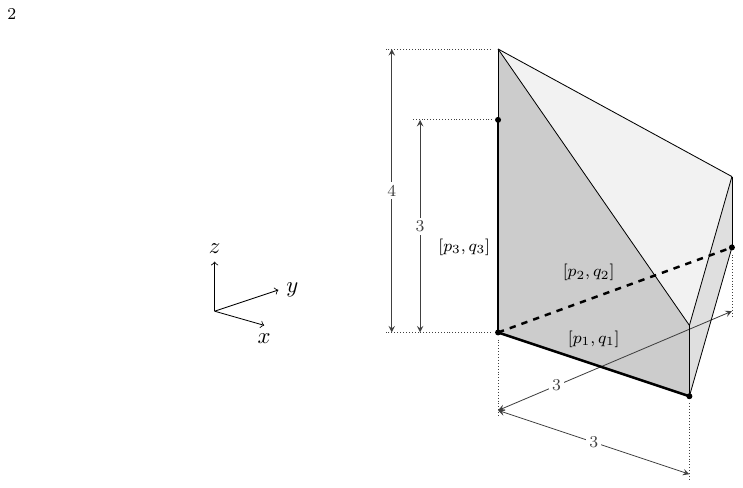}
  \caption{The moment polytope of $-K_X$ for $X=\P_{\P^1}(\calo \oplus\calo \oplus \calo(1))$}
  \label{fig:Fano3fold}
\end{figure}

\section{Proof of \autoref{thm_gen_permutohedra}}\label{sec_gen_perm}

In this section, we prove \autoref{thm_gen_permutohedra} and \autoref{cor_nestohedra}.
We say that a submodular function $f : 2^E \to \R$ with $f(\emptyset) =0$ is \emph{connected}
if $f(I) + f(I^o) > f(E)$ for any $ \emptyset \neq I \subsetneq E$.
To show \autoref{thm_gen_permutohedra}, we recall some results on submodular functions.

\begin{lem}[{\cite[Section 3.3]{MR2171629}}]\label{lem_GP}
Let $f : 2^E \to \R$ be a submodular function with $f(\emptyset) =0$
and  $P_f \subset \R^E$ be the generalized permutohedron defined by \ref{eq_gen_permutohedra}.
Then there exists a unique partition\footnote{A \emph{partition} means that 
$E_1,\dots, E_c$ are nonempty and pairwise disjoint, and satisfy $E=E_1 \cup E_2 \cup \cdots \cup E_c$.}
$E=E_1 \sqcup E_2 \sqcup \cdots \sqcup E_c$ 
 such that 
\begin{enumerate}
\setlength{\parskip}{1mm} 
\item for each $1 \leq k \leq c$,
the restriction $f|_{2^{E_k}} : 2^{E_k} \to \R$, which is also submodular, is connected.
\item $f(I) = f(I \cap E_1) + \cdots + f(I \cap E_c)$ for any $I \in 2^E$.
\item $P_f = P_{f_1} \times \cdots \times P_{f_c} \subset \R^{E_1} \times \cdots \times \R^{E_c}=\R^E$.
\item $\dim P = \# E -c$.
\end{enumerate}
\end{lem}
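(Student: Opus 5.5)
The plan is to build the partition from the ``separating'' subsets of $E$. Define $g(I)\coloneqq f(I)+f(I^o)-f(E)$ for $I\in 2^E$. Submodularity applied to $I, I^o$ gives $g(I)\geq f(\emptyset)=0$. Call $I$ \emph{separating} if $g(I)=0$.

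\textbf{Step 1 (separating sets form a Boolean algebra).} Since $(I\cap J)^o=I^o\cup J^o$ and $(I\cup J)^o=I^o\cap J^o$, adding the submodular inequalities for $(I,J)$ and for $(I^o,J^o)$ gives $g(I\cap J)+g(I\cup J)\leq g(I)+g(J)$. Because $g\geq 0$ and $g(I)=g(I^o)$, the separating sets are closed under complement, union and intersection. They also contain $\emptyset$ and $E$. So they form a Boolean subalgebra of $2^E$. I let $E_1,\dots,E_c$ be its atoms, which partition $E$.

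\textbf{Step 2 (additivity (2)).} For separating $S$ and any $I$, I claim $f(I)=f(I\cap S)+f(I\cap S^o)$. The inequality ``$\leq$'' is submodularity applied to the disjoint sets $I\cap S$ and $I\cap S^o$. For ``$\geq$'', add two inequalities:
\[
f(I)+f(S)\geq f(I\cup S)+f(I\cap S), \qquad f(I\cup S)+f(S^o)\geq f(E)+f(I\cap S^o),
\]
where the second uses $(I\cup S)\cap S^o=I\cap S^o$ and $(I\cup S)\cup S^o=E$. Then use $f(S)+f(S^o)=f(E)$. Iterating over the atoms yields (2).

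\textbf{Step 3 (connectedness (1) and uniqueness).} Suppose $\emptyset\neq J\subsetneq E_k$ satisfies $f(J)+f(E_k\setminus J)=f(E_k)$. Expanding $f(J^o)$ and $f(E)$ by (2) shows $g(J)=0$, which contradicts the fact that $E_k$ is an atom. Hence each $f|_{2^{E_k}}$ is connected.

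For uniqueness, take any partition satisfying (1) and (2). By (2), every union of its parts is separating. Conversely, if $S$ is separating, then (2) applied to $S$ and $S^o$ shows that $S\cap E_k$ separates $f|_{2^{E_k}}$. By (1), $S\cap E_k$ is then $\emptyset$ or $E_k$. So the parts are exactly the atoms, and the partition is unique.

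\textbf{Step 4 (product (3)).} Let $x\in P_f$. The constraints give $\sum_{E_k}x_i\leq f(E_k)$ for each $k$, while $\sum_k\sum_{E_k}x_i=f(E)=\sum_k f(E_k)$ by (2). Hence equality holds on each block, so $x\in\prod_k P_{f_k}$. Conversely, if $x\in\prod_k P_{f_k}$, then for any $I$,
\[
\sum_{I}x_i=\sum_k\sum_{I\cap E_k}x_i\leq \sum_k f(I\cap E_k)=f(I)
\]
by (2), so $x\in P_f$.

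\textbf{Step 5 (dimension (4)).} By (3), it suffices to show $\dim P_f=\#E-1$ when $f$ is connected. This is the step I expect to be the main obstacle, since it needs points of $P_f$ rather than just inequalities. I would use Edmonds' greedy construction. For an ordering $\sigma$ of $E$, set
\[
x_{\sigma(j)}=f(\{\sigma(1),\dots,\sigma(j)\})-f(\{\sigma(1),\dots,\sigma(j-1)\}).
\]
By the standard submodularity argument this point lies in $P_f$, which also shows $P_f\neq\emptyset$. Now fix $\emptyset\neq I\subsetneq E$ and take an ordering that lists $I^o$ first. The resulting point satisfies $\sum_{I^o}x_i=f(I^o)$, hence $\sum_I x_i=f(E)-f(I^o)<f(I)$ by connectedness. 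So no constraint for a proper nonempty $I$ is an implicit equality on $P_f$. The only implicit equality is therefore $\sum_E x_i=f(E)$, and the affine span of $P_f$ is this hyperplane, giving $\dim P_f=\#E-1$. Summing over the blocks gives $\dim P=\#E-c$.
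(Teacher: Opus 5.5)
Your proof is correct. It differs from the paper mainly in being self-contained. The paper cites \cite[Theorem 3.38]{MR2171629} for the existence and uniqueness of a partition satisfying (1) and (2), and \cite[Corollary 3.40]{MR2171629} for (4). The only part it proves itself is (3), and it uses exactly the argument of your Step 4.

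Your Steps 1--3 reconstruct the cited structure theory through separating sets. The key point is that $g(I)=f(I)+f(I^o)-f(E)$ is nonnegative, symmetric, and submodular. It is $f$ plus the submodular function $I\mapsto f(I^o)-f(E)$ from the proof of \autoref{lem_P-P_GP}, so in fact $P_f-P_f=P_g$. This makes the separating sets a Boolean algebra, whose atoms give the partition. Your uniqueness argument also uses, without saying so, that each summand $f(S\cap E_k)+f(E_k\setminus S)-f(E_k)$ is nonnegative. That holds by your Step~0 applied to $f|_{2^{E_k}}$, so nothing is missing.

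Step 5 replaces the citation for (4) with Edmonds' greedy vertices and the standard fact that the affine hull of a polyhedron is cut out by its implicit equalities. This is sound. The greedy points also show that each $P_{f_k}$ is nonempty, which you need in order to add dimensions over the product.

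The citation buys brevity. Your version gives an elementary proof from submodularity alone. It also isolates the identity $g(I)=\sum_k\bigl(f(I\cap E_k)+f(E_k\setminus I)-f(E_k)\bigr)$ with nonnegative summands. The paper re-derives this identity as \ref{eq_not_connected1} in the proof of \autoref{thm_gen_permutohedra}.
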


\begin{proof}
We note that $P_f$ is denoted by $B(f)$ in  \cite{MR2171629}.
The existence and uniqueness of  the partition $E=E_1 \sqcup E_2 \sqcup \cdots \sqcup E_c$ which satisfies (1), (2)  are proved in \cite[Theorem 3.38]{MR2171629}.
Since $f(E)=f(E_1) + \cdots +f(E_c)$ by (2),
the conditions 
\begin{align*}
\sum_{i \in E} x_i = f(E) , \quad \sum_{i \in E_k} x_i \leq  f(E_k) \quad \text{for } 1 \leq k \leq c
\end{align*} 
are equivalent to
\begin{align*}
\sum_{i \in E_k} x_i =  f(E_k) \quad \text{for } 1 \leq k \leq c.
\end{align*} 
Furthermore,
the condition $\sum_{i \in I} x_i \leq f(I)$ for $I \in 2^E$ follows from
 $\sum_{i \in I \cap E_k} x_i \leq f(I \cap E_k) $ for $1 \leq k \leq c$ by (2).
 Hence $P_f$ coincides with the polytope  defined by
 \begin{align*}
\sum_{i \in E_k} x_i =  f(E_k) , \quad \sum_{i \in I} x_i \leq f(I) \quad \text{for } 1 \leq k \leq c \text{ and } I \in 2^{E_k},
\end{align*}
which is nothing but $P_{f_1} \times \cdots \times P_{f_c}$.
 (4) is proved in  \cite[Corollary 3.40]{MR2171629}.
\end{proof}

In order to apply \autoref{cor_lower_bound} to $P_f$,
we use the following description of the Minkowski sum  $P_f -P_f$.

\begin{lem}\label{lem_P-P_GP}
Let $f : 2^E \to \R$ be a submodular function with $f(\emptyset) =0$. 
Then the Minkowski sum $P_f-P_f \coloneqq P_f+(-P_f)= \{ x -y \mid x, y \in P_f\}$ coincides with
\begin{align}\label{eq_P_f-P_f}
\set*{ x \in \R^E \given  \sum_{i \in E} x_i =0, \  \sum_{i \in I } x_i    \leq  f(I) +f(I^o) -f(E)  \text{ for any } I \in 2^{E} }.
\end{align}
\end{lem}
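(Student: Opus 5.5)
The plan is to write $-P_f$ as a generalized permutohedron itself, and then use that generalized permutohedra add under Minkowski sum exactly as their submodular functions do.

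\emph{Step 1: $-P_f$ is a generalized permutohedron.} Define $g : 2^E \to \R$ by $g(I) \coloneqq f(I^o) - f(E)$. Then $g(\emptyset)=0$ and $g(E)=-f(E)$. Moreover $g$ is submodular, because $I \mapsto f(I^o)$ is submodular: complementation swaps $\cup$ and $\cap$. For $x \in P_f$ and $y=-x$ we have $\sum_{i\in E} y_i = -f(E) = g(E)$ and
\[
\sum_{i \in I} y_i = \sum_{i \in I^o} x_i - f(E) \leq f(I^o) - f(E) = g(I).
\]
The same computation read backwards gives the converse. Hence $-P_f = P_g$.

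\emph{Step 2: $P_f + P_g = P_{f+g}$.} Put $h \coloneqq f+g$, so that $h(I) = f(I)+f(I^o)-f(E)$. Then $h$ is submodular with $h(\emptyset)=0$ and $h(E)=0$, and $P_h$ is exactly the set \ref{eq_P_f-P_f}. The inclusion $P_f + P_g \subset P_h$ is immediate by adding the defining equality and inequalities. For the reverse inclusion I would compare support functions. By Edmonds' greedy algorithm (see \cite{MR2171629}), for any submodular $k$ with $k(\emptyset)=0$ and any $w \in \R^E$, order $E$ as $e_1,\dots,e_m$ with $w_{e_1} \geq \cdots \geq w_{e_m}$ and set $S_j = \{e_1,\dots,e_j\}$. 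Then the vector $x^k$ with $x^k_{e_j} = k(S_j)-k(S_{j-1})$ lies in $P_k$ and maximizes $\langle w, \cdot\rangle$ on $P_k$.

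This maximum is linear in $k$ for a fixed ordering, so
\[
h_{P_h}(w) = h_{P_f}(w) + h_{P_g}(w) = h_{P_f+P_g}(w),
\]
where $h_K$ denotes the support function of $K$. Two compact convex sets with the same support function coincide. Alternatively, $x^h = x^f + x^g$ directly, so every vertex of $P_h$ lies in $P_f+P_g$, and convexity finishes the argument. Combining the two steps gives $P_f - P_f = P_f + P_g = P_h$, which is the claim.

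The main obstacle is Step 2, which relies on the greedy description of the vertices, or equivalently of the support function, of $P_k$. This is classical (Edmonds; \cite[Section 3]{MR2171629}), and I would cite it rather than reprove it. If a self-contained argument were wanted, I would verify directly that $x^k \in P_k$ via submodularity, and then verify optimality by LP duality, using the chain $S_1 \subset \cdots \subset S_m$ to build a dual solution.
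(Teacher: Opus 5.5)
Your proof is correct and takes essentially the paper's route. The paper also identifies $-P_f = P_g$ with $g(I)=f(I^o)-f(E)$, and then obtains $P_f+P_g=P_{f+g}$ by citing \cite[Section 4.2]{MR2171629}; your greedy-algorithm and support-function argument is the standard proof of that cited fact, so it makes this step self-contained without changing the approach.
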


\begin{proof}
We see that $-P_f =\{ -x \mid x \in P_f \}$ is a generalized permutohedron defined by
\[
g :  2^E \to \R, \  I \mapsto f(I^o) -f(E) 
\]
as follows.
It is straightforward to check that $g$ is submodular with $g(\emptyset) = 0$.
On the other hand,
\begin{align*}
-P_f=\set*{ x \in \R^E \given -\sum_{i \in E} x_i=f(E),\  -\sum_{i \in I } x_i  \leq f(I) \text{ for any } I \in 2^{E} }
\end{align*}
by \ref{eq_gen_permutohedra}.
Under the condition $ -\sum_{i \in E} x_i=f(E)$, it holds that
\begin{align*}
-\sum_{i \in I } x_i  \leq f(I) \ \lra \ \sum_{i \in I^o } x_i  +f(E) \leq f(I) \ \lra \ \sum_{i \in I^o } x_i  \leq f(I) -f(E) .
\end{align*}
Hence 
\begin{align*}
-P_f&=\set*{ x \in \R^E \given \sum_{i \in E} x_i=- f(E), \ \sum_{i \in I^o } x_i  \leq f(I) -f(E)\text{ for any }  I \in 2^{E}  }\\
&=\set*{ x \in \R^E \given \sum_{i \in E} x_i=- f(E), \ \sum_{i \in I } x_i  \leq f(I^o) -f(E)\text{ for any }  I \in 2^{E}  } =P_g.
\end{align*}

Then, by \cite[Section 4.2]{MR2171629}, 
$P_f - P_f =P_f +P_g$ coincides with $P_{f+g} $, which is precisely \ref{eq_P_f-P_f}.
\end{proof}

Now we can prove  \autoref{thm_gen_permutohedra}.

\begin{proof}[Proof of \autoref{thm_gen_permutohedra}]
The last statement concerning $w_G(X_P,\omega_P)$ follows from $ \ep(X_P,L_P;1_P) \leq w_G(X_P,\omega_P) \leq \wid(P)$.
Hence it suffices to show that both $\ep(X_P,L_P;1_P) $ and $\wid(P)$ coincide with
\begin{align*}
l \coloneqq  \min \{ f(I) + f(I^o) -f(E)  \mid   I  \in 2^E,  f(I) + f(I^o) -f(E)  >0\}.
\end{align*}

\vspace{2mm}
First,
we assume that the function $ f$ is connected,
that is, $f(I) +f(I^o) - f(E) >0$ for any $I \in 2^E \setminus \{ \emptyset, E\}$.
In this case, $\dim P =\# E -1$ by \autoref{lem_GP} (4).
For $I \in 2^E \setminus \{ \emptyset, E\}$,
let $\pi_I : \R^E \to \R$  be the linear map defined by $\pi_I(x) =\sum_{i \in I} x_i$.
Since $I \neq \emptyset, E$ and $\dim P=\# E-1$,  we have $\dim \pi_I(P) = 1$.
Since $f(E) -f(I^o) \leq \sum_{i \in I} x_i \leq f(I) $ for any $x \in P$,
we have 
\begin{align*}
 \wid(P) \leq \abs{\pi_I(P) } \leq f(I) +f(I^o) - f(E).
\end{align*}
Hence we have $\ep(X_P,L_P; 1_P) \leq \wid(P) \leq l$.

Let $\{e_i\}_{i \in E}$ be the standard basis of $\R^E$
and fix $i_0 \in E $.
By \autoref{lem_P-P_GP} and the connectedness of $f$,
the element $l(e_i-e_{i_0}) $ is contained in $ P -P$ for each $i \in E \setminus\{i_0\}$ since 
$\pi_I(l(e_i-e_{i_0})) \in\{-l,0,l\}$
for any $I \in 2^E$.
Hence we can take $p_i, q_i \in P$ such that
$q_i -p_i =l(e_i-e_{i_0})$.
Since $\{e_i-e_{i_0}\}_{i \in  E\setminus \{i_0\}}$ is a basis of $ \{x \in \R^E \mid   \sum_{i \in E} x_i =0 \} \simeq \R^{\# E-1}$,
we can apply \autoref{cor_lower_bound} and obtain $\ep(X_P,L_P; 1_P) \geq l $.
This proves the theorem in the connected case.

\vspace{2mm}
In the general case, we take the partition $E=E_1 \sqcup \cdots \sqcup E_c$ in \autoref{lem_GP}.
For quasi-rational polytopes $Q_1 \subset \R^{n_1}, Q_2 \subset \R^{n_2} $,
we have $(X_{Q_1 \times Q_2}, L_{Q_1 \times Q_2}) =(X_{Q_1} \times X_{Q_2}, \pr_1^* L_{Q_1} \otimes \pr_2^* L_{Q_2})$ and hence
\[
\ep(X_{Q_1 \times Q_2}, L_{Q_1 \times Q_2};1_{Q_1 \times Q_2}) = \min\{ \ep(X_{Q_1 }, L_{Q_1};1_{Q_1}), \ep(X_{Q_2}, L_{ Q_2};1_{Q_2})\}
\]
by \cite[Proposition 3.4 (e)]{MR3338009}.
Furthermore, we can  easily check the equality $\wid(Q_1 \times Q_2 ) =\min \{\wid(Q_1), \wid(Q_2) \}$.
Thus both $\ep(X_P,L_P;1_P) $ and $ \wid(P)$ coincide with
\begin{align*}
 l' \coloneqq \min_{1 \leq k \leq c} \min\{ f(J) +f(E_k \setminus J) -f(E_k ) \mid    J \in 2^{E_k} \setminus \{\emptyset, E_k\} \}
\end{align*}
by \autoref{lem_GP} (3) and the connected case of this theorem.
Hence it suffices to show that $l=l'$.

For $I \in 2^E$, set $I_k\coloneqq I \cap E_k$.
Then it holds that
\begin{align*}
f(I) &= f(I_1) + \cdots +f(I_c),\\
f(I^o) &= f(I^o \cap E_1) + \cdots +f(I^o \cap E_c) =f(E_1 \setminus I_1) + \cdots + f(E_c \setminus I_c),\\
f(E) &=f(E_1) + \cdots +f(E_c)
\end{align*}
by \autoref{lem_GP} (2).
Consequently, we have
\begin{align}\label{eq_not_connected1}
f(I) +f(I^o) -f(E) = \sum_{k=1}^c  \left( f(I_k) +f(E_k \setminus I_k) -f(E_k ) \right).
\end{align}
For each $1\leq k \leq c$, we have $f(I_k) +f(E_k \setminus I_k) -f(E_k ) \geq 0$,
and the strict inequality holds if and only if $I_k \neq \emptyset, E_k$ by the connectedness of $f|_{2^{E_k}}$.
Hence $l \geq l'$ holds.

On the other hand,  if $J \in 2^{E_k} \setminus \{\emptyset, E_k\} $ for some $k$, then 
\begin{align*}
f(J) +f(J^o) -f(E) =  f(J) +f(E_k \setminus J) -f(E_k )  >0
\end{align*}
by \ref{eq_not_connected1}.
Thus $l \leq l'$, and hence $l = l'$, completing the proof.
\end{proof}

\begin{ex}
For real numbers $a_1 \geq  \cdots \geq a_n $,
the \emph{permutohedron} $P_n=P_n(a_1, \dots, a_n)  \subset \R^{n}$ is defined to be the convex hull of 
\[
\{(a_{\sigma(1)}, \dots, a_{\sigma(n)}) \in \R^{n} \mid  \text{$\sigma$ is a permutation on $[n] \coloneqq \{1,2, \dots, n\}$} \} .
\]
By \cite{MR45168},
 $P_n(a_1, \dots, a_n) $ is the generalized permutohedron defined by
\begin{align*}
f : 2^{[n]} \to \R  \ : \ I \mapsto  a_1 + \cdots +a_{\# I}.
\end{align*}
Since $P_n$ is a singleton when $a_1=a_n$,
we assume $a_1  >a_n$.     
Then we have
\begin{align*}
f(I) + f(I^o) -f([n]) &=(a_1 + \cdots + a_{\# I }) +( a_1 + \cdots + a_{\# I^o } )-(a_1+ \cdots + a_n) \\
&=(a_1 + \cdots + a_{\# I }) -  (  a_{\# I^o + 1} + \cdots + a_n)\\
&=a_1-a_n + \sum_{i=2}^{\# I} (a_i - a_{\# I^o +i-1})\\
&\geq a_1-a_n
= f(\{1\}) + f(\{1\}^o) - f([n])
\end{align*}
for $I \in 2^{[n]} \setminus \{\emptyset, [n]\}$.
Thus it holds that
\begin{align*}
\ep(X_{P_n}, L_{P_n};1_{P_n})=\wid(P_n)= a_1-a_n.
\end{align*}
If $X_{P_n}$ is smooth, $w_G(X_{P_n}, \omega_{P_n} ) = a_1-a_n$ also holds,
as shown in \cite[Remark 4.1]{MR4565222} in the case $a_1 >  \cdots > a_n $.
\end{ex}

\begin{ex}\label{ex_Minkowski_sum}
Let $P$ be the Minkowski sum
\[
P=\sum_{I \in 2^E} y_I \Delta_I, 
\]
where $y_I\ge 0$ and $\Delta_I\subset \R^E$ denotes the convex hull of $\{e_i\}_{i\in I}$.
Assume that $\dim P>0$.

By \cite[Proposition 3.12]{MR2191630}, \cite[Section 6]{MR2487491},
$P$ is the generalized permutohedron $P_f$ associated with
\begin{align*}
f : 2^E \to \R, \  I \mapsto \sum_{J \in 2^E, J \cap I \neq \emptyset} y_J .
\end{align*}
Since 
\begin{align*}
f(I) +f(I^o) -f(E) &=  \sum_{J \in 2^E, J \cap I \neq \emptyset} y_J +  \sum_{J \in 2^E, J \cap I^o \neq \emptyset} y_J - 
 \sum_{J \in 2^E \setminus \{\emptyset\}} y_J\\
 &=\sum_{J \in 2^E , J \cap I \neq \emptyset, J \cap I^o \neq \emptyset}  y_J   \eqqcolon w_I,
\end{align*}
we have 
\[
\ep(X_P,L_P;1_P) =\wid (P)  = \min\{ w_I \mid I \in 2^E, w_I >0\}.
\]
\end{ex}

Now we can prove the following corollary, which specializes to \autoref{cor_nestohedra} when $y_I=1$ for all $I \in \calb$.

\begin{cor}\label{cor_Minkowski_sum}
Let $\calb$ be a building set on a finite set $E$ and 
$P=\sum_{I \in \calb} y_I \Delta_{I}$ for $y_I \in \R_{> 0}$.
If $\dim P >0$, equivalently if $\calb \neq \{ \{i\} \mid  i \in E\}$, 
then $X_P$ is smooth and
\[
\ep(X_P,L_P;1_P)= w_G(X_P,\omega_P) =\wid (P) = \min  \left\{w_I \mid  I \in \calb, w_I >0 \right\}
\]
for $w_I =\sum\limits_{J \in \calb , J \cap I \neq \emptyset, J \cap I^o \neq \emptyset} y_J $.
\end{cor}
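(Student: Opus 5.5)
The proof follows from \autoref{ex_Minkowski_sum}, \autoref{thm_gen_permutohedra}, and the smoothness of nestohedra. There are three things to check: the equivalence of the dimension conditions, smoothness of $X_P$, and the reduction of the minimum over $2^E$ to a minimum over $\calb$.

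\emph{Dimension and smoothness.} By \autoref{ex_Minkowski_sum} (with $y_I=0$ for $I\notin\calb$), $P=P_f$ with $f(I)=\sum_{J\in\calb,\,J\cap I\neq\emptyset} y_J$, and $f(I)+f(I^o)-f(E)=w_I$. We have $\dim P=0$ exactly when every $w_I$ vanishes. Since all $y_J>0$, this happens exactly when no $J\in\calb$ meets both $I$ and $I^o$ for any $I$, that is, when every $J\in\calb$ is a singleton. This gives the stated equivalence.

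For smoothness I would use the standard fact that the normal fan of a Minkowski sum with all coefficients positive is the common refinement of the normal fans of the summands. Since every $y_I>0$, this fan depends only on the set of summands, so $\Sigma_P=\Sigma_{P_\calb}$. Hence $X_P=X_{P_\calb}$, which is smooth by \cite[Proposition 7.10]{MR2487491}. In particular $P$ is Delzant, and \autoref{thm_gen_permutohedra} gives
\[
\ep(X_P,L_P;1_P)=w_G(X_P,\omega_P)=\wid(P)=\min\{w_I\mid I\in 2^E,\ w_I>0\}.
\]

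\emph{Reducing to $I\in\calb$.} This is the main step. The inequality $\min_{2^E}\le\min_{\calb}$ is trivial, so we must show that for every $I$ with $w_I>0$ there is some $K\in\calb$ with $0<w_K\le w_I$.

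Decompose $I$ into its $\calb$-connected components $I=K_1\sqcup\cdots\sqcup K_m$, the maximal elements of $\calb|_I$. These exist because $\{i\}\in\calb$ and $\calb$ is closed under unions of intersecting members. Any $J\in\calb$ with $J\cap K_s\neq\emptyset$ and $J\not\subset K_s$ must meet $I^o$: otherwise $J\subset I$, and then $J\cup K_s\in\calb|_I$ strictly contains $K_s$, contradicting maximality. Hence $w_{K_s}\le w_I$ for every $s$.

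If some $w_{K_s}>0$, we are done. Otherwise $w_{K_s}=0$ for all $s$, so no $J\in\calb$ straddles any $K_s$. A $J$ meeting both $I$ and $I^o$ meets some $K_s$, and since it does not straddle $K_s$ it must contain $K_s$; so $J$ meets both $K_s$ and $K_s^o$. That makes $w_{K_s}\ge y_J>0$, a contradiction. Since $w_I>0$, some $J$ does meet both $I$ and $I^o$, so this case cannot occur.

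The corollary follows, and taking $y_I=1$ recovers \autoref{cor_nestohedra}.
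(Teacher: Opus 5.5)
Your proof is correct and is essentially the paper's argument: the same normal-fan argument for smoothness, then passing to maximal elements of $\calb|_I$ and using the same inclusion between the families of members of $\calb$ meeting both a set and its complement. The only difference is that you skip the paper's preliminary reduction to a single maximal element $E_k$ of $\calb$ (via $w_I=\sum_k w_{I\cap E_k}$), and you get positivity from a $J$ straddling $I$ rather than from $E_1$ straddling $S_1$.

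There is one slip in your last paragraph. A $J$ that meets $K_s$ without straddling it would be \emph{contained in} $K_s$, not contain it. No case split is needed anyway: any $J$ meeting $I$ and $I^o$ meets some $K_s$ and also meets $I^o\subset K_s^o$, which gives $0<y_J\le w_{K_s}\le w_I$ directly.
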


\begin{proof}
Since  $y_I >0$ for any $I \in \calb$,
the normal fan $\Sigma_P$ of $P$ coincides with that of the nestohedron $P_\calb$.
Hence  $X_P = X_{P_{\calb}}$ is smooth.
By setting $y_I \coloneqq 0$ for $I \in 2^E \setminus \calb$,
we can apply  \autoref{ex_Minkowski_sum}.
Hence it suffices to show that the minimum of
\begin{align*}
\set*{w_I = \sum\limits_{J \in \calb , J \cap I \neq \emptyset, J \cap I^o \neq \emptyset} y_J  \given I \in 2^E, w_I >0}
\end{align*}
is attained by
some $I \in \calb \subset 2^E$.

Let $E_1, \dots, E_c $ be all the maximal elements in $\calb$.
By the definition of building sets,
$E_1, \dots, E_c $ are nonempty, pairwise disjoint, and $E=E_1 \cup \cdots \cup E_c$,
that is, 
$E=E_1 \sqcup \cdots \sqcup E_c$ is a partition.\footnote{We can check that this partition is nothing but that in \autoref{lem_GP},
though we do not use this fact.}

Fix $I \in 2^E$ which satisfies $w_I >0$, and
set $I_k \coloneqq I \cap E_k$ for $1 \leq k \leq c$.
What we need to show is that there exists $S \in \calb$ such that $ w_I \geq w_S >0$.

Since each $J \in \calb$ is contained in just one of $E_1, \dots, E_c $,
it holds that
\begin{align*}
\{ J \in \calb \mid   J \cap I \neq \emptyset, J  \cap  I^o  \neq \emptyset\} 
=\bigsqcup_{k=1}^c \{ J \in \calb \mid  J \cap I_k  \neq \emptyset, J \cap  I_k^{o}  \neq \emptyset\}
\end{align*}
and hence $w_I = \sum_{k=1}^c w_{I_k}$.
Since we assume $w_I >0$, there exists $1 \leq  k \leq c$ such that $w_I \geq w_{I_k} >0$.
By renumbering the indices, we may assume that $w_I \geq w_{I_1} >0$.

Let $ S_1, \dots, S_r $ be all the maximal elements in $\calb|_{I_1}=\{ J \in \calb \mid    J \subset I_1\}$.
Since $w_{I_1} >0$, $I_1$ is nonempty.
Thus $\calb|_{I_1}$ is nonempty since it contains $\{i\}$ for $i  \in I_1$,
and hence $r \geq 1$.

\begin{claim}\label{claim_nesto}
$w_{I_1} \geq w_{S_1} >0$ holds.
\end{claim}

\begin{proof}[Proof of \autoref{claim_nesto}]
If $J \in \calb$ satisfies $J \cap E_1 \neq \emptyset$, then $J \cup E_1 \in \calb$  by the definition of building sets and hence $J \subset E_1$ by the maximality of $E_1$ in $\calb$.
Thus we have $w_{E_1} =0$.
Hence $I_1 \subsetneq E_1$ by $w_{I_1} >0$.
Then $E_1 \in \calb$ satisfies $E_1 \cap S_1  = S_1 \neq \emptyset$ and $E_1 \cap S_1^o  \supset E_1 \cap I_1^o \neq \emptyset$.
Hence we have $w_{S_1} \geq y_{E_1} >0$.

To see that $w_{I_1} \geq w_{S_1} $, it suffices to show 
\begin{align*}
\{ J \in \calb \mid   J \cap I_1  \neq \emptyset, J \cap I_1^o  \neq \emptyset\} \supset 
\{ J \in \calb \mid   J \cap S_1  \neq \emptyset, J \cap S_1^o  \neq \emptyset\} .
\end{align*}
Assume $ J \in \calb$ satisfies $J \cap S_1  \neq \emptyset, J \cap S_1^o  \neq \emptyset$.
Then $J \cap I_1  \supset  J \cap S_1 \neq \emptyset$.
If $J \cap I_1^o  = \emptyset $, then $J \subset I_1$.
Since $J , S_1 \in \calb|_{I_1}$ satisfy $J \cap S_1  \neq \emptyset$,
we have $ J \subset S_1$ by the maximality of $S_1$ in the building set $\calb|_{I_1}$,
which contradicts $J \cap S_1^o  \neq \emptyset$.
Thus  $J \cap I_1^o  \neq \emptyset $ and the above inclusion holds.
\end{proof}

Hence we have $ w_I \geq w_{I_1} \geq w_{S_1} >0$ for $S_1 \in \calb|_{I_1} \subset \calb$,
and this corollary is proved.
\end{proof}

\begin{rem}
For a Delzant polytope $P \subset M_{\R}$,
it is known that $\wid(P)$ coincides with the \emph{facet width},
that is,
the minimum of $ \{ \abs{\pi_F(P)}  \mid  \text{$F$ is a facet of $P$}\}$,
where $\pi_F : M_{\R} \to \R$ is the linear map defined by the primitive inner normal vector to the facet $F$ 
\cite[Lemma 3.11]{MR4533481}.
Hence if a generalized permutohedron $P_f$ is Delzant,
then we may take the minimum in \ref{eq_thm_gen_permutohedra} only for $I \in 2^E$ which defines a facet of $ P_f$.

In particular, $\wid (P) = \min \, \{ w_I \mid  I \in \calb, w_I >0 \} $ in \autoref{cor_Minkowski_sum} also follows from \autoref{ex_Minkowski_sum} and the description of faces of nestohedra \cite[Corollary 3.13]{MR2191630},  \cite[Theorem 7.4]{MR2487491}.
\end{rem}

\begin{ex}\label{ex_CH}
Let $\calb=\big\{\{1\}, \{2\}, \{3\}, \{4\}, \{1,2\}, \{3,4\}, \{1,2,3,4\}\big\}$ be the building set on $E=\{1,2,3,4\}$ in \cite[Remark 4.2]{MR4565222}.
Let $P=\sum_{I \in \calb} y_I \Delta_{I}$ with $y_I \in \R_{> 0}$.
For $ I \in \calb$, it holds that 
\begin{align*}
\{J \in \calb \mid  J \cap I \neq \emptyset, J \cap I^o \neq \emptyset\}  =
\begin{cases}
\big\{\{1,2\}, \{1,2,3,4\}\big\} &  \text{if $I=\{1\}$ or $\{2\}$,}  \\
\big\{\{3,4\}, \{1,2,3,4\}\big\} &  \text{if $I=\{3\}$ or $\{4\}$,}  \\
\big\{\{1,2,3,4\}\big\} &  \text{if $I=\{1,2\}$ or $\{3,4\}$,}  \\
\ \emptyset &  \text{if $I=\{ 1,2,3,4\}$} .
\end{cases}
\end{align*}
Hence we have
\[
\ep(X_P,L_P;1_P)= w_G(X_P,\omega_P) =\wid (P) =y_{\{1,2,3,4\}}.
\]
In particular,
$\ep(X_{P_{\calb}},L_{P_{\calb}};1_{P_{\calb}})= w_G(X_{P_{\calb}},\omega_{P_{\calb}}) =\wid ({P_{\calb}}) =1 $ holds.
\end{ex}

\begin{figure}[htbp]
  \centering
 \includegraphics[scale=0.69]{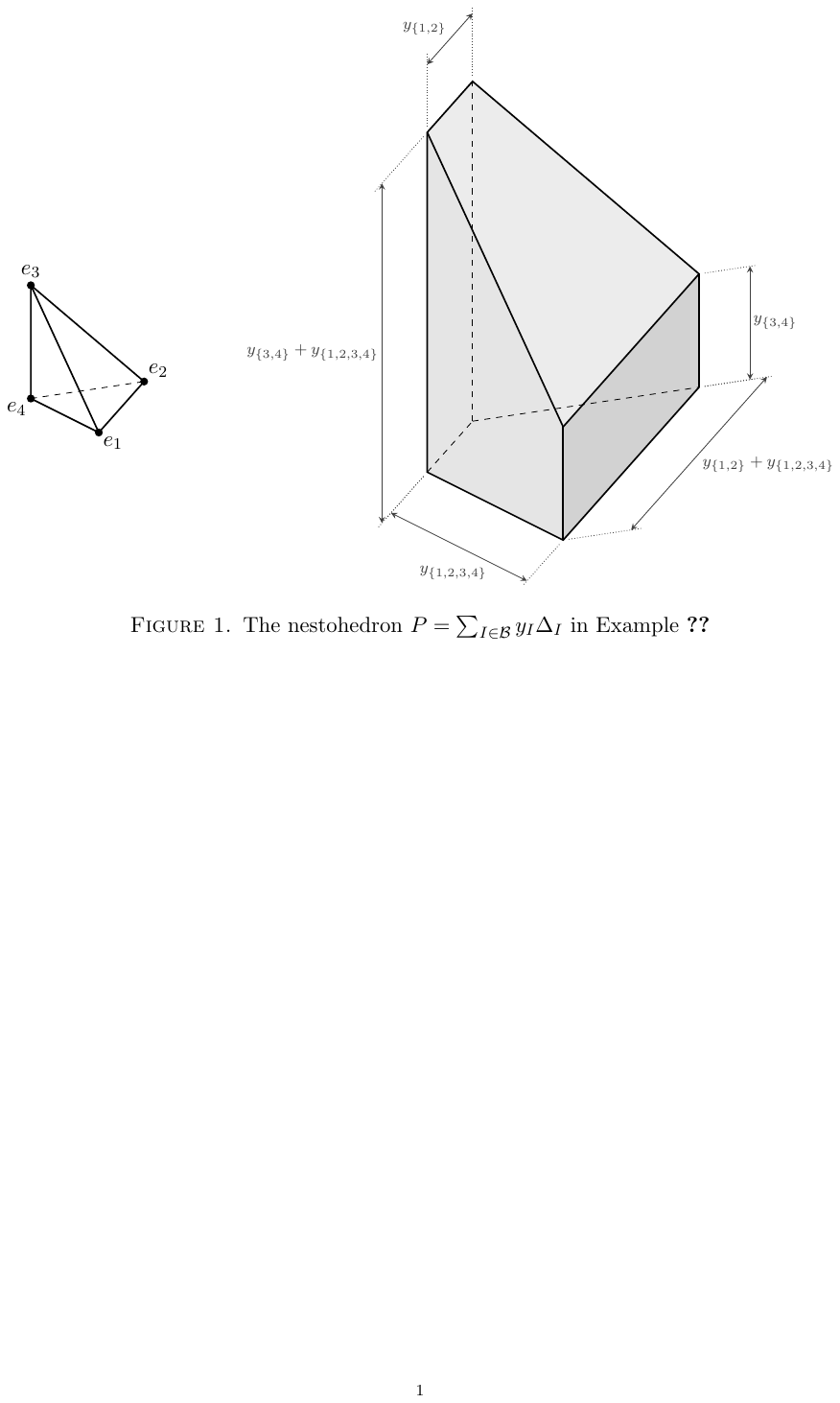}
\caption{The nestohedron $P=\sum_{I \in \calb} y_I \Delta_{I}$ in \autoref{ex_CH}}
  \label{fig:CH}
\end{figure}

\section{Smooth toric Fano $3$-folds}\label{sec_toric_fano_3fold}

\begin{proof}[Proof of \autoref{cor_fano_3fold}]
Among the $18$ smooth toric Fano $3$-folds classified by \cite{MR631434, MR670903}, 
there are two $3$-folds $X_1, X_2$ with Picard number $5$.
By \cite[Example 2.6]{MR4065180},
there exist building sets $\calb_1, \calb_2$ such that $X_{P_{\calb_i}} = X_i$ for each $i=1,2$.
It is known that a quasi-rational polytope $P \subset \R^E$ is a generalized permutohedron
if and only if every edge of $P$ is parallel to $e_i-e_j$ for some $i,j \in E$ (see \cite[Theorem 3.1.3]{MR4651496} for example).
If a toric variety $X$ is obtained from $X_i = X_{P_{\calb_i}}$ by successive blow-downs along toric strata,
then every edge of the moment polytope $Q$ of any ample $\R$-line bundle on $X$ is parallel to an edge of the nestohedron $P_{\calb_i}$.
Hence $Q$ is a generalized permutohedron.

Among the $18$ smooth toric Fano $3$-folds,
$16$ of them (including $X_1,X_2$) are obtained from $X_1$ or $X_2$ by successive blow-downs along toric strata (see \cite[\S 1]{MR670903}).
If $X_P$ is one of these 16 types,
it holds that $\ep(X_P,L_P;1_P) =w_G(X_P, \omega_P) =\wid(P)$ by \autoref{thm_gen_permutohedra}.

The remaining two $3$-folds are $ Y_1=\P_{\P^2}( \calo \oplus \calo(2))$ and $Y_2 $, which is obtained by blowing up a line $l \subset  \P_{\P^2}( \calo ) \subset  \P_{\P^2}( \calo \oplus \calo(2))$.
Assume $X_P=Y_1$.
An ample $\R$-line bundle on $ Y_1=\P_{\P^2}( \calo \oplus \calo(2))$ is of the form $\calo_{\varpi}(a) \otimes \varpi^* \calo_{\P^2}(b) $ for $a,b >0$,
where $ \varpi: \P_{\P^2}( \calo \oplus \calo(2)) \to \P^2$ is the canonical morphism and $\calo_\varpi(1)$ is the tautological quotient line bundle.
The moment polytope $P_{a,b} \subset \R^3$ of $\calo_{\varpi}(a) \otimes \varpi^* \calo_{\P^2}(b) $  is the convex hull of 
\begin{align*}
\{0\} \times b\Delta   \quad \cup \quad \{a\} \times (2a+b)\Delta ,
\end{align*}
where $\Delta =\Conv ((0,0),(1,0), (0,1)) \subset \R^2$. 
Applying \autoref{thm_combinatorial_bound} to the projection $p_1 : \R^3 \to \R$ to the first factor and $u'= a \in \R$,
we have $\ep( X_{P_{a,b}},L_{P_{a,b}};1_{P_{a,b}} ) =a$
since $p_1(P_{a,b}) =[0,a]$ and $ p_1^{-1} (a) \cap P_{a,b}= (2a+b)\Delta$.
Since
\[
 \wid (P_{a,b}) \leq \abs{ p_1(P_{a,b})} =a =\ep( X_{P_{a,b}},L_{P_{a,b}};1_{P_{a,b}} ) \leq w_G ( X_{P_{a,b}},\omega_{P_{a,b}})   \leq  \wid (P_{a,b}),
\]
we obtain $\ep( X_{P_{a,b}},L_{P_{a,b}};1_{P_{a,b}} ) = w_G ( X_{P_{a,b}},\omega_{P_{a,b}})  = \wid (P_{a,b}) =a$.

Assume $X_P=Y_2$.
An ample $\R$-line bundle on $ Y_2$ is of the form $ \mu^* L_{P_{a,b} }  - c E$ for $a,b,c >0$ with $\min\{a,b\} >c$,
where $\mu : Y_2 \to Y_1$ is the blow-up along the line $l$ corresponding to the edge $\Conv ((0,0,0),(0,0,b)) \subset P_{a,b}$ and $E \subset Y_2$ is the exceptional divisor.
The moment polytope $P_{a,b,c}$ of $ \mu^* L_{P_{a,b} }  - c E$ is the intersection 
\[
P_{a,b} \cap \{ (x,y,z) \in \R^3  \mid x+y \geq c\}.
\]
Since $p_1(P_{a,b,c}) =[0,a]$ and $ p_1^{-1} (a) \cap P_{a,b,c} = (2a+b)\Delta$,
we obtain $\ep( X_{P_{a,b,c}},L_{P_{a,b,c}};1_{P_{a,b,c}} ) = w_G ( X_{P_{a,b,c}},\omega_{P_{a,b,c}})  = \wid (P_{a,b,c}) =a$
by the same argument as for $P_{a,b}$.
\end{proof}

\bibliographystyle{amsalpha}
\bibliography{mainbibs}

\end{document}